\documentclass[11pt]{article}

% Packages
\usepackage{amsmath}
\usepackage{amsthm}
\usepackage{amssymb}
\usepackage{amscd}
\usepackage{pdfsync}
\usepackage{epsfig}
\usepackage{graphicx,color} 
 
%Page layout
 \textwidth 16cm
 \textheight 23 cm
 \topmargin -1cm
 \oddsidemargin 0cm
 \evensidemargin 0cm
 \parskip 2mm
  \setlength{\parindent}{0pt}

%Theorems etc ...
 \newtheorem{theorem}{\sc Theorem}[section]
 
 \newtheorem{cor}[theorem]{\sc Corollary}
 \newtheorem{prop}[theorem]{\sc Proposition}

\newtheorem{remark}[theorem]{\sc Remark}

\usepackage{latexsym}
\usepackage{url}

\def\PG{\mathrm{PG}}

\def\F{\mathbb{F}}
\def\Fq{\mathbb{F}_q}

\def\cD{\mathcal{D}}

\def\GL{\mathrm{GL}}

\def\dim{\mathrm{dim}}

\def\cF{\mathcal{F}}
\def\cB{\mathcal{B}}
\def\cA{\mathcal{A}}
\def\cH{\mathcal{H}}

\DeclareMathOperator{\subl}{subl}

% Le seguenti linee sono state aggiunte da Corrado
\DeclareMathOperator{\wt}{wt}

\DeclareMathOperator{\PGL}{PGL}
\DeclareMathOperator{\PGGL}{P\Gamma L}
\DeclareMathOperator{\AGL}{AGL}
\DeclareMathOperator{\AG}{AG}

%==============================================================================
% NUOVE AGGIUNTE 11-26/2/2014
%==============================================================================

%\usepackage{showlabels}

\def\cP{\mathcal{P}}

\usepackage{tikz}
\usetikzlibrary{matrix,decorations.pathreplacing}

\begin{document}

\title{Subgeometries and linear sets on a projective line}
\author{Michel Lavrauw\thanks{Dipartimento di Tecnica e Gestione dei Sistemi Industriali,
Universit\`a di Padova, Stradella S. Nicola 3, 36100 Vicenza, Italy, e-mail: michel.lavrauw@unipd.it. 
The research of this author is supported by the Research Foundation Flanders-Belgium (FWO-Vlaanderen) and by a Progetto di Ateneo from Universit\`a di Padova (CPDA113797/11).} 
\ and Corrado Zanella\thanks{Dipartimento di Tecnica e Gestione dei Sistemi Industriali,
Universit\`a di Padova, Stradella S. Nicola 3, 36100 Vicenza, Italy, e-mail: corrado.zanella@unipd.it.
The research of this author is supported by the Italian Ministry of Education, University and Research (PRIN 2012 project ``Strutture geometriche, combinatoria e loro applicazioni'').}}
\date{\today}
\maketitle
\begin{abstract}
We define the splash of a subgeometry on a projective line, extending the definition of \cite{BaJa13} to general 
dimension and prove that a splash is always a linear set. We also prove the converse: each linear set on a projective line is the splash of some subgeometry. Therefore an alternative description of linear sets on a projective line is obtained.
We introduce the notion of a club of rank $r$, generalizing the definition from \cite{FaSz2006}, and show that 
clubs  correspond to tangent splashes. We obtain a condition 
for a splash to be a scattered linear set and
give a characterization of clubs, or equivalently of tangent splashes.
We also investigate the equivalence problem for tangent splashes and determine a necessary and sufficient condition for two 
tangent splashes to be (projectively) equivalent.

A.M.S. CLASSIFICATION: 51E20

KEY WORDS: linear set - tangent splash - subgeometry - finite projective line
\end{abstract}

\section{Introduction and motivation}\label{sec:intro}

Given a subgeometry $\pi_0$ and a line $l_\infty$ in a projective space $\pi$, by extending the hyperplanes of $\pi_0$ to hyperplanes of $\pi$ and intersecting these with the line $l_\infty$, one obtains a set of points on the projective line $l_\infty$. 
Precisely, if we denote the set of hyperplanes of a projective space $\pi$ by ${\mathcal{H}}(\pi)$, and $\overline{U}$ denotes the extension of a subspace $U$ of the subgeometry $\pi_0$ to a subspace of $\pi$,  then we obtain the set of points $\{ l_\infty \cap \overline{H} ~:~H \in {\mathcal{H}}(\pi_0)\}$.
These sets have been studied in \cite{BaJa13} and \cite{BaJa14} for Desarguesian planes and cubic extensions, i.e. for a subplane $\pi_0\cong\PG(2,q)$ in $\pi\cong\PG(2,q^3)$, where such a set is called 
the {\it splash of $\pi_0$ on $l_\infty$}. If $l_\infty$ is tangent (respectively external) to $\pi_0$, then a splash is called the {\it tangent splash} (respectively {\it external splash}) 
{of $\pi_0$ on $l_\infty$}. Note that when $l_\infty$ is secant to $\pi_0$, the splash of $\pi_0$ on $l_\infty$ is just a subline.
We study the splash of a subgeometry $\PG(r-1,q)$ in $\PG(r-1,q^n)$ on a line $l_\infty$.

The article is structured as follows. In Section \ref{sec:prelim} we collect the necessary definitions and notation in order to make the paper self contained and accessible. In Section \ref{sec:equiv} we show the equivalence between splashes and linear sets on a projective line (Theorem \ref{thm:equivalence}) and prove that the weight of a point of the linear set is determined by the number of hyperplanes through that point, leading to a characterisation of scattered linear sets. In Section \ref{sect:Characterization} we obtain a geometric characterisation of so-called clubs or equivalently of tangent splashes, and count the number of distinct 
tangent splashes
in $\PG(1,q^n)$. 
%In the next section, we turn our attention to the incidence geometric properties of clubs, starting with an axiomatic study of clubs, and introducing the notion of quotient product geometry. 
%This allows to take the order of some normal rational curves contained in a club, after field reduction,
%as a criterion for classification (Theorem \ref{thm:cclassification}).
We conclude with Section \ref{sec:projective_equivalence}, where we study the projective equivalence of tangent splashes.

This work is motivated by the link between splashes and linear sets on a projective lines.  The concept of a splash of a subplane, although quite a natural geometric object to consider, has
been studied only recently, see \cite{BaJa13,BaJa14}.
This paper extends the definition of a splash from subplanes to 
subgeometries of order $q$ in higher dimensional projective spaces, and from cubic to general extension fields. 
Moreover, this generalization leads to a new interpretation of linear sets on a projective line.
The equivalence stated in Theorem 3.1 may turn out useful in investigating linear sets,
for instance by linking them to certain ruled surfaces in affine $(2n)$-dimensional spaces over $\Fq$,  relying on results from \cite{BaJa13,BaJa14}.
 Linear sets and field reduction have played an important role 
%in the construction and characterization of many objects in finite geometry in recent years, see \cite{Po10} and \cite{LaVaPrep}, for a survey and further references.
in the construction and characterization of many objects in finite geometry in recent years. The reader is referred to \cite{Po10} and \cite{LaVaPrep} for surveys and further references.

\section{Preliminaries}\label{sec:prelim}

In this section we collect the definitions and notation that will be used throughout the article. The finite field of order $q$ will be denoted by $\F_q$. The projective space associated with a vector space $U$ will be denoted by $\PG(U)$. The $(r-1)$-dimensional projective space over the field $\F$ will be denoted by $\PG(\F^r)$ or $\PG(r-1,q)$ in case $\F=\F_q$. 
%The points, lines and hyperplanes of a projective space $\pi$ will be denoted by ${\mathcal{P}}(\pi)$, ${\mathcal{L}}(\pi)$ and ${\mathcal{H}}(\pi)$, respectively.
The sets of points, lines and hyperplanes of a projective space $\pi$ will be denoted by ${\mathcal{P}}(\pi)$, ${\mathcal{L}}(\pi)$ and ${\mathcal{H}}(\pi)$, respectively;
but we will often write $\pi$ instead of $\cP(\pi)$ when the meaning is clear.
A {\it subgeometry} of a projective space $\PG(\F^r)$ is the set $S$ of points for which there exists a frame with respect to which the homogeneous coordinates of points in $S$ take values from a subfield $\F_0$ of $\F$, together with the subspaces generated by these points over $\F_0$. 
A subgeometry $\pi_0$ of $\PG(\F^r)$ is then isomorphic to $\PG(\F_0^r)$. 
If $\F_0$ has order $q$, then $\pi_0$ is called a {\it subgeometry of order $q$},
or a \textit{$q$-subgeometry}. 
A $k$-dimensional projective subspace $U$ of a subgeometry 
$\pi_0\cong \PG(r-1,q)$ of $\pi\cong \PG(r-1,q^n)$ generates 
%(in fact can be identified with) 
a $k$-dimensional subspace of $\PG(r-1,q^n)$ ($-1\le k<r$)
(called the \textit{extension} of $U$). 
If there is no ambiguity we will denote both subspaces by $U$, otherwise we might use $U$ for the $\F_q$-subspace and $\overline{U}$ for the $\F_{q^n}$-subspace.
For $k=1$ or $k=2$ a $k$-dimensional subspace of $\pi_0$ as above is also called a
$q$-\textit{subline} or a $q$-\textit{subplane} of $\pi$.

Let $\pi_0$ be a $q$-subgeometry in $\pi\cong \PG(r-1,q^n)$, $r\geq 2$, $n>1$,
and consider a line $l_\infty$ of $\pi$ not contained in the extension of a hyperplane of $\pi_0$. 
We define the {\it splash of $\pi_0$ on $l_\infty$} as the set of points of $l_\infty$ which are contained in a subspace spanned by points of $\pi_0$. We denote this set by $S(\pi_0,l_\infty)$. If the line $l_\infty$ is external to $\pi_0$, then $S(\pi_0,l_\infty)$ is called an {\it external splash}, and if $l_\infty$ is tangent to $\pi_0$, then $S(\pi_0,l_\infty)$ is called a {\it tangent splash}.
The \textit{centre} of a tangent splash $S(\pi_0,l_\infty)$ is the intersection point $\pi_0\cap l_\infty$.

We will use the same notation $\cF_{r,n,q}$ as in \cite{LaVaPrep} for the {\it field reduction map} from $\pi=\PG(r-1,q^n)$ to $\PG(rn-1,q)$. The image of ${\mathcal{P}}(\pi)$ under $\cF_{r,n,q}$ is a {\it Desarguesian spread} $\cD_{r,n,q}$ of $\PG(rn-1,q)$. 
%We note that the elements of $\cD_{r,n,q}$ are $(n-1)$-dimensional subspace of $\pi$ and they form a partition of ${\mathcal{P}}(\PG(rn-1,q))$.
We note that the elements of $\cD_{r,n,q}$ are $(n-1)$-dimensional subspaces of $\pi$ and they form a partition of $\PG(rn-1,q)$. 
The field reduction map induces a bijection between the set of points of $\PG(r-1,q^n)$ and the set of elements of $\cD_{r,n,q}$. 
If $T$ is a subset of $\PG(rn-1,q)$, then the set of elements of $\cD_{r,n,q}$ which have non-empty intersection with $T$ will be denoted by $\cB(T)$, i.e. 
\begin{eqnarray}
\cB(T):=\{R\in \cD_{r,n,q}~:~R\cap T \neq \emptyset\}.
\end{eqnarray}
The inverse image of the set $\cB(T)$ under the field reduction map $\cF_{r,n,q}$, is a set of points of $\PG(r-1,q^n)$, which by abuse of notation will also be denoted by $\cB(T)$.
Moreover if $V$ is a subspace of the underlying vector space then 
$\cB(\PG(V))$ will also be denoted by $\cB(V)$.
An {\it $\F_q$-linear set} of $\PG(r-1,q^n)$ is a set of points $L$ for which there exists a subspace $U$ of $\PG(rn-1,q)$ such that $L=\cB(U)$. Given a linear set $L=\cB(U)$, we say that the {\it rank} of $L$ is $\dim(U) +1$ and the {\it weight} of a point $x\in L$ is defined as
$\dim(\cF_{r,n,q}(x)\cap U)+1$.
For more on field reduction we refer to \cite{LaVaPrep}.

A linear set $L$ is called {\it scattered} if every point of $L$ has weight one. Scattered linear sets are equivalent to scattered subspaces with respect to a Desarguesian spread and they were introduced in \cite{BlLa2000}. Scattered linear sets were further studied in \cite{LuMaPoTrPrep} and \cite{LaVa2013}.
We call a linear set $L$ a {\it club} if $L$ has rank $r\geq 3$, and a point of $L$ has weight $r-1$;
consequently, all other points of $L$ have weight one. 
This generalizes the definition of a club as introduced in \cite{FaSz2006} from $r=3$ to $r\geq 3$. Clubs and scattered linear sets on the projective line have been studied in \cite{LaVV10}.

%\section{Linearity of a splash}
%Let $S_T$ denote a tangent splash with centre $T$, defined by the order $q$ subplane $\pi_0$ of $\pi\cong{\mathrm{PG}}(2,q^n)$, tangent to the
%line $l_\infty$ at $T$, i.e.
%\begin{eqnarray}
%S_T=\{ l_\infty \cap l ~:~l \in {\mathcal{L}}(\pi_0)\}.
%\end{eqnarray}
%Then in the dual plane we obtain the set of lines
%\begin{eqnarray}
%S_T^d=\{ \langle l_\infty ^d, l^d\rangle ~:~l \in {\mathcal{L}}(\pi_0)\}=\{ \langle l_\infty ^d, x\rangle ~:~x \in {\mathcal{P}}(\pi_0^d)\}.
%\end{eqnarray}
%Now consider the projection $\Psi$ from $\pi\setminus l_\infty^d$ onto the 
%quotient space $\pi/l_\infty^d\cong {\mathrm{PG}}(1,q^n)$.
%Then $\Psi(S_T)$ is
%equivalent to the projection of the subplane $\pi_0^d$ onto a line 
%disjoint from $l_\infty^d$, and hence a linear set. Since $l_\infty^d$ lies on exactly one line of $\pi_0^d$, namely $T^d$, we may 
%conclude that $\Psi(S_T^d)$ is an ${\mathbb{F}}_q$-linear set of rank 3 on the projective line, and has size $q^2+1$ .
%Such a linear set is also called a club.
%
%Considering the projectivity (explain why this is a projectivity)
%\begin{eqnarray}
%\omega~:~l_\infty \rightarrow \pi/l_\infty^d ~:~x~\mapsto ~\Psi(x^d),
%\end{eqnarray}
%we obtain $\omega(S_T)=\Psi(S_T^d)$. We may conclude that $S_T$ is an ${\mathbb{F}}_q$- linear set of rank 3 and of size $q^2+1$.

\section{Equivalence of linear sets and splashes}\label{sec:equiv}

\begin{theorem}\label{thm:equivalence}
Let $r,n>1$.
If $S=S(\pi_0,l_\infty)$ is the splash of the $q$-subgeometry $\pi_0$ of $\PG(r-1,q^n)$ on the line $l_\infty$, 
then $S$ is an $\Fq$-linear set 
of rank $r$. Conversely, if $S$ is an ${\mathbb{F}}_q$-linear set of rank $r$ on the line $l_\infty\cong\PG(1,q^n)$, then there exists an embedding of
$l_\infty$ in $\PG(r-1,q^n)$ and a $q$-subgeometry $\pi_0$ of $\PG(r-1,q^n)$ such that $S=S(\pi_0,l_\infty)$.
\end{theorem}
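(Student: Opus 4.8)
The plan is to put $\pi_0$ into standard position by a collineation and then reduce the whole statement --- in both directions --- to a single computation: describing exactly when a point $\langle sP+tR\rangle$ of a parametrised line lies on the extension of a hyperplane of the standard $q$-subgeometry. This will be governed by an explicit $\Fq$-linear map $\Lambda\colon\Fq^r\to\F_{q^n}^2$ whose image is the linear set in question and whose injectivity is precisely the hypothesis that $l_\infty$ lies in no extended hyperplane of $\pi_0$.

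For the first implication, since a collineation of $\PG(r-1,q^n)$ carries splashes to splashes and $\Fq$-linear sets to $\Fq$-linear sets of the same rank, I may assume $\pi_0$ is the subgeometry of points with homogeneous coordinates in $\Fq$. Its hyperplanes are then the sets $H_{\mathbf c}\colon\sum_i c_ia_i=0$ (in $\Fq$-coordinates) with $\mathbf c=(c_1,\dots,c_r)\in\Fq^r\setminus\{0\}$, with extension $\overline{H_{\mathbf c}}\colon\sum_i c_ix_i=0$. As the extension of any proper subspace of $\pi_0$ is contained in some $\overline{H_{\mathbf c}}$, the splash $S(\pi_0,l_\infty)$ is the set of points of $l_\infty$ lying on some $\overline{H_{\mathbf c}}$. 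Write $l_\infty=\{\langle sP+tR\rangle:(s:t)\in\PG(1,q^n)\}$ with $P=(p_1,\dots,p_r)$ and $R=(\rho_1,\dots,\rho_r)$ linearly independent in $\F_{q^n}^r$. Then $\langle sP+tR\rangle\in\overline{H_{\mathbf c}}$ iff $s\sum_i c_ip_i+t\sum_i c_i\rho_i=0$, which --- setting $\Lambda(\mathbf c):=\bigl(\sum_i c_i\rho_i,\,-\sum_i c_ip_i\bigr)$, an $\Fq$-linear map $\Fq^r\to\F_{q^n}^2$ --- says that $(s:t)=\langle\Lambda(\mathbf c)\rangle$ whenever $\Lambda(\mathbf c)\neq0$. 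The standing hypothesis that no $\overline{H_{\mathbf c}}$ ($\mathbf c\neq0$) contains $l_\infty$, i.e. contains both $\langle P\rangle$ and $\langle R\rangle$, says precisely that $\Lambda$ is injective. Hence $S(\pi_0,l_\infty)=\{\langle\Lambda(\mathbf c)\rangle:\mathbf c\in\Fq^r\setminus\{0\}\}=\cB(\mathrm{Im}\,\Lambda)$, and $\mathrm{Im}\,\Lambda$ is an $\Fq$-subspace of $\F_{q^n}^2$ of $\Fq$-dimension $r$; so $S$ is an $\Fq$-linear set of rank $r$.

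For the converse, write $L=\cB(U)$ with $U\subseteq\F_{q^n}^2$ an $\Fq$-subspace of $\Fq$-dimension $r$; I may assume $\langle U\rangle_{\F_{q^n}}=\F_{q^n}^2$, since otherwise $L$ is a single point, which is never a splash --- by the previous paragraph $\mathrm{Im}\,\Lambda$ always spans $\F_{q^n}^2$ because $\langle P\rangle\neq\langle R\rangle$. Fix an $\Fq$-basis $u_1,\dots,u_r$ of $U$, say $u_i=(\sigma_i,\tau_i)$, let $\pi_0$ be the standard $q$-subgeometry of $\PG(r-1,q^n)$, and set $R:=(\sigma_1,\dots,\sigma_r)$ and $P:=(-\tau_1,\dots,-\tau_r)$ in $\F_{q^n}^r$. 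With these choices the map $\Lambda$ of the first part becomes $\Lambda(\mathbf c)=\sum_i c_iu_i$, so $\mathrm{Im}\,\Lambda=U$ and $\Lambda$ is injective. Moreover $P,R$ are $\F_{q^n}$-independent: if $\mu R+\nu P=0$ with $(\mu,\nu)\neq0$ then every $u_i$ lies in the one-dimensional $\F_{q^n}$-subspace $\{(\sigma,\tau):\mu\sigma-\nu\tau=0\}$, contradicting $\langle U\rangle_{\F_{q^n}}=\F_{q^n}^2$. Thus $\langle P\rangle$ and $\langle R\rangle$ span a line $l_\infty$ of $\PG(r-1,q^n)$, the injectivity of $\Lambda$ is the required hypothesis on $l_\infty$, and by the first part $S(\pi_0,l_\infty)=\cB(\mathrm{Im}\,\Lambda)=\cB(U)=L$ once $\PG(1,q^n)$ is identified with $l_\infty$ via the embedding $(s:t)\mapsto\langle sP+tR\rangle$.

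I do not expect a serious obstacle: after normalising $\pi_0$ the statement holds for formal reasons, and the content is bookkeeping. The two places that need care are (i) unwinding the definition of the splash --- reading ``contained in a subspace spanned by points of $\pi_0$'' as ``contained in the extension of a hyperplane of $\pi_0$'', the proper subspaces of $\pi_0$ being exactly those contained in a hyperplane --- and (ii) in the converse, verifying that the line $l_\infty$ produced is a genuine line satisfying the non-degeneracy hypothesis, which is what forces the (harmless) restriction to linear sets spanning $l_\infty$. The same argument can be phrased without coordinates: through the standard duality on $l_\infty$, the splash of $\pi_0$ on $l_\infty$ is a projection of the dual $q$-subgeometry of $\pi_0$ from an $(r-3)$-dimensional centre onto a line, so the theorem reduces to the known description of the linear sets of rank $r$ on a line as the projections of $q$-subgeometries $\PG(r-1,q)$; I would nonetheless present the self-contained coordinate argument above.
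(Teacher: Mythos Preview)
Your proof is correct and takes a genuinely different route from the paper's. The paper argues by duality: it observes that the splash $S$ dualises to $S^d=\{\langle l_\infty^d,x\rangle:x\in\cP(\pi_0^d)\}$, which is the projection of the dual subgeometry $\pi_0^d$ from the $(r-3)$-subspace $l_\infty^d$ onto a line, and then invokes the Lunardon--Polverino characterisation \cite{LuPo04} of linear sets of rank $r$ on a line as precisely such projections; the converse is obtained by running the same dualisation backwards. You instead normalise $\pi_0$ to standard position and carry out an explicit coordinate computation via the $\F_q$-linear map $\Lambda$, which makes the argument self-contained and avoids any external citation --- in effect you reprove the needed instance of \cite{LuPo04} from scratch. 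The paper's approach is shorter and more conceptual once the cited result is granted, while yours is more elementary and makes the rank-$r$ claim and the role of the non-degeneracy hypothesis (``$l_\infty$ in no extended hyperplane'') completely transparent as the injectivity of $\Lambda$. You also isolate and handle the degenerate case of a single-point linear set of rank $r\le n$, which is never a splash; the paper's proof silently relies on the centre $l^d$ being disjoint from $\pi_0^d$, which fails in exactly that case. Your closing remark already identifies the duality-plus-projection argument, so you are aware of both routes.
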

\begin{proof}
The proof is based on the following three observations:\\
(i) in a finite projective space of dimension at least two, a set of hyperplanes is called \textit{linear}
if it is linear in the dual space;\\
(ii) given a line $l_\infty$ and an $(r-3)$-dimensional subspace $z$ in $\PG(r-1,q^n)$,
such that $l_\infty\cap z=\emptyset$, the map
$x\mapsto \langle x,z\rangle$ defines a projectivity
from $\PG(r-1,q^n)\setminus z$ to $\PG(r-1,q^n)/z$; 
hence it maps linear sets into linear sets and non-linear sets into non-linear sets;\\
(iii) if $\pi_0$ is a $q$-subgeometry of $\pi\cong\PG(r-1,q^n)$, $r>2$, then the set $\cH(\pi_0)$ 
of hyperplanes of $\pi_0$ can be identified with
the set $\cP(\pi_0^d)$ of points of a $q$-subgeometry $\pi_0^d$ of $\pi^d$.

If $r=2$ the statement is trivial, since $S$ is a $q$-subline, and it is splash of itself.
In the following $r>2$ is assumed.

{\bf A.} First we prove that each splash is a linear set.

Let $S$ denote a splash on the
line $l_\infty$, defined by the $q$-subgeometry $\pi_0$ of $\pi\cong{\mathrm{PG}}(r-1,q^n)$, i.e.
\begin{eqnarray}
S=\{ l_\infty \cap \bar h ~:~ h \in {\mathcal{H}}(\pi_0)\}.
\end{eqnarray}
The dual of $S$ is
\begin{eqnarray}
S^d=\{ \langle l_\infty ^d, \bar h^d\rangle ~:~h \in {\mathcal{H}}(\pi_0)\}=\{ \langle l_\infty ^d, x\rangle ~:~x \in {\mathcal{P}}(\pi_0^d)\}.
\end{eqnarray}
Note that if a point of $\pi_0^d$ were on $l_\infty^d$, then the line $l_\infty$ would
be contained in the extension of a hyperplane of $\pi_0$, contradicting the definition of a splash.
Now consider the projection $\Psi$ of $\pi^d\setminus l_\infty^d$ onto the 
quotient space $\pi^d/l_\infty^d$.
Then $S^d=\Psi(\cP(\pi_0^d))$ is
projectively equivalent (by (ii) above) to
the projection with vertex $l_\infty^d$ of the subgeometry $\pi_0^d$ onto a line 
disjoint from $l_\infty^d$, and hence is a linear set by \cite[Theorem 2]{LuPo04}.

\noindent
{\bf B.} Next we prove that each linear set of rank $r$ on $\PG(1,q^n)$ is a splash.

Let $S$ be an ${\mathbb{F}}_q$-linear set of rank $r$ on the line $l_\infty\cong\PG(1,q^n)$.
Embed $l_\infty$ as a line in $\pi={\mathrm{PG}}(r-1,q^n)$. 
%If $S$ is a subline, i.e.\ $r=2$, then $S$ is the splash of itself, $S=S(S,l_{\infty})$, and we are done.

Consider an arbitrary $(r-3)$-dimensional subspace $z$ disjoint from $l_\infty$ in $\pi$.
The set of hyperplanes
$L=\{\langle z,x\rangle ~:~ x \in S\}$, which is projectively equivalent to $S$,
defines an ${\mathbb{F}}_q$-linear set $L^d$ of rank $r$ in the dual space $\pi^d$, 
contained in the line $z^d\cong {\mathrm{PG}}(1,q^n)$. 
Hence there exists a subgeometry $\pi_0\cong\PG(r-1,q)$, such
%that, $L^d$ is the projection of $\pi_0$ from an $(r-3)$-dimensional subspace $l^d$, i.e.
that $L^d$ is the projection of $\pi_0^d$ from an $(r-3)$-dimensional subspace $l^d$
sharing no dual point with $\pi_0^d$ (implying that no extension of a hyperplane of $\pi_0$ contains $l$), i.e.
$L^d=\{ z^d\cap \langle x, l^d\rangle~:~ x \in {\mathcal{P}}(\pi_0^d)\}$.

Equivalently we have
$L=\{\langle z, \bar H\cap l\rangle~:~ H \in {\mathcal{H}}(\pi_0)\}$.
This implies that $S$ is the projection from $z$ onto the line $l_\infty$ of the splash $\{\bar H\cap l~:~H \in {\mathcal{H}}(\pi_0)\}$ of
$\pi_0$ on the line $l$, and hence $S$ is a splash.
\end{proof}

In order to avoid the case of a $q$-subline, from now on $r$ will be an integer greater than two, unless otherwise stated (cf.\ prop.\ \ref{prop:unique}).

\begin{theorem}\label{thm:weight}
Let $S$ be the splash of a subgeometry $\pi_0\cong\PG(r-1,q)$ of $\pi\cong\PG(r-1,q^n)$ on $l_\infty\cong\PG(1,q^n)$. The following statements are equivalent.
\begin{itemize}
\item[(i)] The point $x\in S$ has weight $j$.
\item[(ii)] There are $(q^j-1)/(q-1)$ hyperplanes of $\pi_0$ through $x\in S$.
\end{itemize}
\end{theorem}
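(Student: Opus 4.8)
The plan is to translate the weight statement, which is intrinsically about field reduction, into a statement about hyperplanes of $\pi_0$ by passing through the dual picture already used in the proof of Theorem~\ref{thm:equivalence}. Recall from that proof that $S$ (as a splash) is, up to projectivity, the projection of $\cP(\pi_0^d)$ from the $(r-3)$-subspace $l_\infty^d$ onto a line; equivalently, $S=\Psi(\cP(\pi_0^d))$ where $\Psi$ is the quotient map modulo $l_\infty^d$. A point $x\in S$ then corresponds to a hyperplane $\bar h$ of $\pi$, and the hyperplanes of $\pi_0$ through $x$ (i.e.\ whose extension passes through $x$) correspond exactly to the points of $\pi_0^d$ in the fibre $\Psi^{-1}(x)\cap\cP(\pi_0^d)=\langle x,l_\infty^d\rangle\cap \pi_0^d$. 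So statement (ii) says: the $(r-2)$-subspace $\langle x,l_\infty^d\rangle$ meets the subgeometry $\pi_0^d$ in a subspace of projective dimension $j-1$, i.e.\ in a $q$-subspace of rank $j$ (which has exactly $(q^j-1)/(q-1)$ points).

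First I would set up the correspondence between the two ``rank $r$'' descriptions of $S$ precisely. Projecting a $q$-subgeometry $\pi_0^d\cong\PG(r-1,q)$ of $\PG(r-1,q^n)$ from an $(r-3)$-subspace disjoint from it onto a line is, by the standard theory (the Lunardon--Polverino construction, cf.\ \cite{LuPo04} and the field-reduction dictionary of \cite{LaVaPrep}), the same as taking the linear set $\cB(U)$ for a suitable $\F_q$-subspace $U$ of rank $r$ inside $\cF_{2,n,q}$-space $\PG(2n-1,q)$: concretely, $\pi_0^d$ corresponds to an $\F_q$-frame, hence to an $(r-1)$-dimensional $\F_q$-subspace $U$ of the $2n$-dimensional $\F_q$-vector space underlying $l_\infty$, with the projection center corresponding to the kernel of the projection of $U$ to the relevant quotient. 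The key bookkeeping point is that under this identification, for a point $x\in l_\infty$ with spread element $\cF_{2,n,q}(x)=R_x$, the intersection $R_x\cap U$ of $\F_q$-dimension $j$ corresponds bijectively to the points of $\pi_0^d$ lying in the $(r-2)$-subspace $\langle x,l_\infty^d\rangle$. I would prove this by a direct linear-algebra computation: choosing coordinates so that $l_\infty^d$ and the line of projection are in standard position, the fibre over $x$ intersected with the subgeometry is cut out by exactly the $\F_q$-linear conditions defining $R_x\cap U$.

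The cleaner route, which I would actually write up, is to avoid coordinates on both sides and argue via dimensions only. Going back to the original (primal) splash $S=\{l_\infty\cap\bar h: h\in\cH(\pi_0)\}$: the hyperplanes of $\pi_0$ through a fixed $x\in l_\infty$ are the hyperplanes of $\pi_0$ whose extension contains $x$; these form a linear system in $\cH(\pi_0)=\cP(\pi_0^d)$, namely the points of $\pi_0^d$ lying in a certain subspace $W_x$ of $\pi^d$ determined by $x$ (the ``dual'' of $x$ relative to $l_\infty$). One checks $W_x\supseteq l_\infty^d$ and $\dim W_x = \dim l_\infty^d + 1 = r-2$ (since $x$ is a point off $l_\infty^d$). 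Then I would invoke the compatibility of field reduction with subgeometries: the weight of $x$ in the linear set $S$ equals the projective dimension plus one of $W_x\cap\pi_0^d$, because the weight counts, via $\cF$, the $\F_q$-rank of $R_x\cap U$, and the construction of $U$ from $\pi_0^d$ matches $R_x\cap U$ with the cone/trace picture giving precisely $W_x\cap\pi_0^d$. Since a $q$-subspace of rank $j$ has $(q^j-1)/(q-1)$ points, (i) $\Leftrightarrow$ (ii) follows. I would prove both directions at once since the equality ``number of hyperplanes through $x$'' $=(q^{\,\wt(x)}-1)/(q-1)$ is what the argument produces.

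The main obstacle is the bookkeeping in the middle step: making airtight the claim that the $\F_q$-subspace $U$ arising from $\pi_0^d$ via the projection/field-reduction dictionary satisfies $\dim_{\F_q}(R_x\cap U)=\dim(W_x\cap\pi_0^d)+1$ for \emph{every} point $x$, uniformly. This requires pinning down exactly which $\F_q$-structure the Desarguesian spread induces on the quotient space $\pi^d/l_\infty^d\cong l_\infty$ and checking it is the one for which $\cB(U)=S$; equivalently, one must verify that projecting a subgeometry and then doing field reduction gives the same $\F_q$-subspace (up to $\GL$) as doing field reduction of the subgeometry first and then projecting the corresponding subspace. This is morally standard but needs a careful dimension count, and it is the only place where a genuine computation (rather than a formal argument) is unavoidable; everything else is a matter of chasing the dualities and projections set up in Theorem~\ref{thm:equivalence}.
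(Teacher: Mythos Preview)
Your approach is essentially the same as the paper's: dualize via the framework of Theorem~\ref{thm:equivalence}, so that hyperplanes of $\pi_0$ through $x$ become points of $\pi_0^d$ in the fibre $\langle x^d,l_\infty^d\rangle$, and then equate the size of that fibre with the weight of $x$. The paper, however, dispatches in one line what you flag as the ``main obstacle'': it simply states as a known property of the Lunardon--Polverino projection model \cite[Theorem~2]{LuPo04} that a point of a linear set has weight $j$ if and only if it is the image of exactly $\theta_j=(q^j-1)/(q-1)$ points of the projected subgeometry. You are trying to re-derive this compatibility between the projection picture and the field-reduction picture from scratch, which is why your sketch balloons; once you accept (or cite) that fact, the whole proof is two sentences of dualization, exactly as in the paper.
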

\begin{proof} Put $\theta_j:=(q^j-1)/(q-1)$.
Theorem \ref{thm:weight} is the dualization of the fact that in the representation of a linear set 
as a projection of a $q$-subgeometry $\pi_0$ \cite[Theorem 2]{LuPo04}, a point $x$ has weight $j$ if, 
and only if, $x$ is projection of precisely $\theta_j$ points of $\pi_0$.
Suppose $x\in S$ has weight $j$. Arguing as in the proof of Theorem \ref{thm:equivalence}, 
this implies that there are $\theta_j$ points of $\pi_0^d$ which project onto $x^d$ from $l_\infty^d$, and hence
$x$ is contained in $\theta_j$ hyperplanes of $\pi_0$.
Conversely, consider the set $\cH(x)$ of hyperplanes of $\pi_0$ on a point $x\in S$, and suppose 
$\cH(x)=\{h_1,\ldots, h_{\theta_j}\}$. This means that
$x^d=\langle h_i^d,l_\infty^d\rangle$ for $i\in\{1,\ldots ,{\theta_j}\}$, and hence that $x$ has weight $j$.
\end{proof}

\begin{cor}
Let $S$ be the splash of a subgeometry $\pi_0\cong\PG(r-1,q)$ of $\pi\cong\PG(r-1,q^n)$ on $l_\infty\cong\PG(1,q^n)$. Then $S$ is a scattered linear set if and only if $S$ is an external splash, where every point of $S$ is on exactly one hyperplane of $\pi_0$.
\end{cor}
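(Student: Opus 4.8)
The plan is to deduce the corollary directly from Theorem~\ref{thm:weight}, using the definitions of scattered linear set, external splash, and tangent/secant line. First I would recall that by Theorem~\ref{thm:equivalence} the splash $S$ is an $\F_q$-linear set of rank $r$, so the notion of ``scattered'' applies to it. By definition $S$ is scattered if and only if every point $x\in S$ has weight one. By Theorem~\ref{thm:weight}, a point $x\in S$ has weight one if and only if there are $(q-1)/(q-1)=1$ hyperplanes of $\pi_0$ through $x$. Hence $S$ is scattered if and only if every point of $S$ lies on exactly one hyperplane of $\pi_0$. This already gives the second half of the ``if and only if'' phrasing of the statement, so the remaining task is to show that ``every point of $S$ is on exactly one hyperplane of $\pi_0$'' is equivalent to ``$l_\infty$ is external to $\pi_0$.''

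For that equivalence I would argue by contrapositive: $l_\infty$ is \emph{not} external to $\pi_0$ precisely when $l_\infty$ meets $\pi_0$ in at least one point, i.e.\ when $l_\infty$ is tangent or secant to $\pi_0$ (it cannot be contained in the extension of a hyperplane of $\pi_0$, by the standing hypothesis in the definition of a splash; and since $r\geq 3$, a line meeting $\pi_0$ in two or more points is secant and is contained in a $q$-subline of $\pi_0$). If $x\in l_\infty\cap\pi_0$, then $x$ is a point of the subgeometry $\pi_0$, and a point of $\PG(r-1,q)$ lies on $\theta_{r-1}=(q^{r-1}-1)/(q-1)$ hyperplanes of $\PG(r-1,q)$; since $r>2$ we have $\theta_{r-1}>1$, so $x$ has weight $r-1>1$ and $S$ is not scattered. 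Conversely, if $l_\infty$ is external to $\pi_0$, then no point of $S$ is a point of $\pi_0$, and I would show every point of $S$ then has weight one: a point $x\in S$ of weight $j$ corresponds, in the dual picture used in the proofs of Theorems~\ref{thm:equivalence} and~\ref{thm:weight}, to a subspace of $\pi_0$ (the span of the $\theta_j$ hyperplanes through $x$, or equivalently the intersection of the corresponding dual points) of projective dimension $j-1$ that is mapped by the projection with centre $l_\infty$ (or its dual) to a single point; if $j\geq 2$ this forces that $(j-1)$-dimensional subspace of $\pi_0$ to meet the centre nontrivially, and chasing this back through $\Psi$ and the dualities yields a point of $\pi_0$ on $l_\infty$, contradicting externality.

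I expect the main obstacle to be the clean handling of the case distinction in the last step — specifically, making precise why a point of $S$ of weight $\geq 2$ forces $l_\infty\cap\pi_0\neq\emptyset$, without re-deriving the structure of the projection map from Lunardon--Polverino. The cleanest route is probably to avoid the quotient-space computation entirely and instead invoke Theorem~\ref{thm:weight} together with the following elementary fact: if $x\in S$ lies on $k\geq 2$ hyperplanes $h_1,\dots,h_k$ of $\pi_0$, then their intersection $h_1\cap\cdots\cap h_k$ is a subspace of $\pi_0$ of dimension $\geq r-3$ whose extension contains $x$; one then observes that $x$ lies on $\langle \text{extension of }h_1\cap h_2,\, x\rangle$, and pushing this configuration to the limit (or using that $x$ has weight $\geq 2$ means $\cF_{r,n,q}(x)\cap U$ has dimension $\geq 1$) shows the corresponding spread element meets $U$ in a line, which under field reduction forces $x$ to be covered by a $q$-subline of $\pi_0$ lying on $l_\infty$, hence $l_\infty$ secant — contradiction once we know it is not secant, leaving only the tangent/external dichotomy. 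If one is willing to quote the standard fact that a linear set of rank $r$ on $\PG(1,q^n)$ is scattered iff it has the maximal number $(q^r-1)/(q-1)$ of points (so that any point of weight $\geq 2$ destroys scatteredness), the argument shortens considerably, and I would present it that way, keeping the field-reduction discussion as a remark.
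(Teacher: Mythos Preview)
Your first paragraph is correct and is essentially the paper's own (very short) proof: scattered means every point has weight one, and by Theorem~\ref{thm:weight} this is equivalent to every point lying on exactly one hyperplane of $\pi_0$. At that point the corollary is done, because ``every point on exactly one hyperplane'' already implies $l_\infty\cap\pi_0=\emptyset$ (a point of $\pi_0$ lies on $\theta_{r-1}>1$ hyperplanes), so the ``external'' in the statement is a consequence, not an independent condition.

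The problem is your second and third paragraphs, where you set yourself the task of proving that ``$l_\infty$ external to $\pi_0$'' is \emph{equivalent} to ``every point of $S$ lies on exactly one hyperplane''. That equivalence is false for $r\ge 4$, so the argument you sketch cannot work. Concretely: take two hyperplanes $h_1,h_2$ of $\pi_0$; their extensions meet in the $(r-3)$-dimensional subspace $\overline{h_1\cap h_2}$ of $\PG(r-1,q^n)$. For $r\ge 4$ this subspace contains many points outside $\pi_0$, and a line $l_\infty$ through such a point $x$ can be external to $\pi_0$ while $x$ lies on both $\bar h_1$ and $\bar h_2$, hence has weight $\ge 2$. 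In your dual picture, the $(j-1)$-dimensional span of the corresponding points of $\pi_0^d$ does meet the centre $l_\infty^d$ when $j\ge 2$, but that intersection need not contain a point of $\pi_0^d$; this is exactly the gap in the step ``chasing this back \ldots\ yields a point of $\pi_0$ on $l_\infty$''.

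So the fix is simply to read the right-hand side of the corollary as the conjunction ``external \emph{and} every point on exactly one hyperplane'' (the first being redundant once the second holds), and stop after your first paragraph. Everything after that is an attempt to prove a strictly stronger, and false, statement.
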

\begin{proof}
If $S$ is scattered then each point has weight 1. The rest of the proof is immediate from Theorem \ref{thm:equivalence} and Theorem \ref{thm:weight}.
\end{proof}

\section{Characterization of clubs}\label{sect:Characterization}

%From now on $r$ will be an integer greater than two.
%In the following a subline or a subplane 
%of order $q$  are called a \textit{$q$-subline} and a \textit{$q$-subplane}, respectively.
If $P_1$, $P_2$ and $P_3$ are distinct collinear points in some projective space $\PG(m,q^n)$,
then the unique $q$-subline containing them is denoted by $\subl_q(P_1,P_2,P_3)$.
The aim of this section is to prove the following characterization.

\begin{theorem}\label{thm:zero}
%  Let $T$ be a point and $\mathcal A$ a $q^r$-set  in ${\mathrm{PG}}(1,q^n)$ such that
  Let $T$ be a point and $\mathcal A$ a $q^{r-1}$-set, $3\leq r\le n$,  in ${\mathrm{PG}}(1,q^n)$ such that
  $T\not\in{\mathcal A}$. 
  Consider the following three statements.
  \begin{enumerate}
    \item[$(i)$] $T\cup{\mathcal A}$ is an ${\mathbb{F}}_q$-club, and $T$ has weight $r-1$;
    \item[$(ii)$] $T\cup{\mathcal A}$ is a tangent splash with centre $T$;
    \item[$(iii)$] for any pair of distinct points $P,Q\in{\cal A}$, the subline
    ${\mathrm{subl}}_q(T,P,Q)$ is contained in $T\cup{\cal A}$.
  \end{enumerate}
  Then the statements $(i)$ and $(ii)$ are equivalent, and if $q>2$ all three statements are equivalent.
\end{theorem}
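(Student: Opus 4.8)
The plan is to establish the cycle $(i)\Rightarrow(ii)\Rightarrow(iii)\Rightarrow(i)$ together with the reverse implication $(ii)\Rightarrow(i)$; the latter secures the unconditional equivalence of $(i)$ and $(ii)$, while $(iii)\Rightarrow(i)$ is the only place where $q>2$ enters. I will use Theorems~\ref{thm:equivalence} and~\ref{thm:weight}, the abbreviation $\theta_j=(q^j-1)/(q-1)$, and the following elementary fact, which I would record as a lemma: \emph{an $\Fq$-linear set of rank $r$ on $\PG(1,q^n)$ possessing a point $T$ of weight $r-1$ is automatically a club, with $T$ its unique point of non-minimal weight}; indeed, writing the set as $\cB(U)$ with $\dim U=r-1$ and $W=\cF_{2,n,q}(T)\cap U$ of projective dimension $r-2$, any other spread element $\cF_{2,n,q}(y)$ is disjoint from $\cF_{2,n,q}(T)\supseteq W$, so the Grassmann identity inside $U$ gives $\dim(\cF_{2,n,q}(y)\cap U)\le\dim U-\dim W-1=0$. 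For $(ii)\Rightarrow(i)$: if $T\cup\mathcal A=S(\pi_0,l_\infty)$ is a tangent splash of centre $T$, then $T\in\pi_0$, and since $\overline h\cap\pi_0=h$ the hyperplanes of $\pi_0$ whose extension contains $T$ are precisely the $\theta_{r-1}$ hyperplanes of $\pi_0$ through $T$; by Theorem~\ref{thm:weight} $T$ has weight $r-1$, and the lemma yields $(i)$. For $(i)\Rightarrow(ii)$: by Theorem~\ref{thm:equivalence} write $T\cup\mathcal A=S(\pi_0,l_\infty)$ after embedding $l_\infty$ in $\PG(r-1,q^n)$; since $T$ has weight $r-1$, there are exactly $\theta_{r-1}$ hyperplanes $h$ of $\pi_0$ with $T\in\overline h$, and if $Z$ is the intersection in $\pi_0$ of this family $\mathcal G$, then $\mathcal G=\{h\in\cH(\pi_0):h\supseteq Z\}$ (the inclusion $\subseteq$ is the definition of $Z$, and $\supseteq$ holds because $h\supseteq Z$ forces $\overline h\supseteq\overline Z\ni T$), so counting hyperplanes of $\PG(r-1,q)$ through a fixed $(\dim Z)$-subspace gives $\theta_{r-1}=|\mathcal G|=\theta_{r-1-\dim Z}$, hence $\dim Z=0$ and $T\in\overline Z=Z\in\pi_0$. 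Finally $l_\infty$ meets $\pi_0$ only in $T$, since two common points would make $l_\infty$ the extension of a line of $\pi_0$, which for $r\ge3$ lies inside the extension of a hyperplane, contradicting the splash definition; thus $l_\infty$ is tangent to $\pi_0$ with centre $T$, which is $(ii)$.

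For $(ii)\Rightarrow(iii)$, fix distinct $P,Q\in\mathcal A$. By $(i)$ each has weight one and hence lies in the extension of a unique hyperplane of $\pi_0$, say $h_P\neq h_Q$, with $T\notin h_P$ (otherwise $\overline{h_P}\supseteq\langle T,P\rangle=l_\infty$). Put $g:=h_P\cap h_Q$, an $(r-3)$-subspace of $\pi_0$ with $T\notin g$; one checks that $\overline g\cap l_\infty=\emptyset$. Projecting $\pi$ from $\overline g$ onto a line $\ell$ restricts to a projectivity $l_\infty\to\ell$ and carries, for each of the $q+1$ hyperplanes $h\supseteq g$ of $\pi_0$, the single point $\overline h\cap l_\infty$ to the image of $\overline h$, these $q+1$ images being exactly the points of the $q$-subline $\pi_0/g$ of $\ell$. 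Pulling back, $\{\overline h\cap l_\infty:h\supseteq g\}$ is a $q$-subline of $l_\infty$ contained in $S(\pi_0,l_\infty)=T\cup\mathcal A$; it contains $P$, $Q$ and $T=\overline{\langle T,g\rangle}\cap l_\infty$, so being the $q$-subline on three distinct collinear points it equals $\subl_q(T,P,Q)$, which is therefore contained in $T\cup\mathcal A$.

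For $(iii)\Rightarrow(i)$, assume $q>2$ and choose an affine coordinate on $l_\infty$ with $T=\infty$, so that $\mathcal A\subseteq\F_{q^n}$ with $|\mathcal A|=q^{r-1}$. A $q$-subline through $\infty$ is an affine $\Fq$-line together with $\infty$, so $(iii)$ says $P+\Fq(Q-P)\subseteq\mathcal A$ for all distinct $P,Q\in\mathcal A$; replacing $\mathcal A$ by a translate (which preserves all three statements) we may assume $0\in\mathcal A$, whence $\Fq P\subseteq\mathcal A$ for every $P\in\mathcal A$. If $P,Q\in\mathcal A$ are not $\Fq$-proportional, pick $\mu\in\Fq\setminus\{0,1\}$ — possible exactly because $q>2$ — and set $\lambda:=\mu/(\mu-1)\in\Fq^*$; then $\lambda P$ and $\mu Q$ are distinct points of $\mathcal A$, so $\lambda P+\Fq(\mu Q-\lambda P)\subseteq\mathcal A$, and its value at the parameter $1/\mu$ equals $P+Q$ (the proportional case being trivial). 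Hence $\mathcal A$ is an $\Fq$-subspace $V$ of $\F_{q^n}$, necessarily of $\Fq$-dimension $r-1$, and a direct check with field reduction shows $T\cup\mathcal A=\{[1:0]\}\cup\{[v:1]:v\in V\}=\cB(V\times\Fq)$ is an $\Fq$-linear set of rank $r$ in which $T$ has weight $r-1$; by the lemma it is a club. When $q=2$, $\subl_2(T,P,Q)=\{T,P,Q\}$, so $(iii)$ holds for every $\mathcal A$ and does not imply $(i)$, which accounts for the restriction $q>2$. I expect this last implication to be the main obstacle: the substance is to force $\Fq$-linearity of $\mathcal A$ out of the subline-closure property, and the choice of scalars $\lambda,\mu$ realizing $P+Q$ on the $\Fq$-line through $\lambda P$ and $\mu Q$ is precisely the step needing a third field element, so it breaks down for $q=2$; a secondary care point is the projection in $(ii)\Rightarrow(iii)$, where one must verify both $\overline g\cap l_\infty=\emptyset$ and that the pencil of hyperplanes of $\pi_0$ through $g$ maps onto a $q$-subline of the quotient line.
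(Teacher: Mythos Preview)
Your argument is correct throughout. For the equivalence $(i)\Leftrightarrow(ii)$ you follow essentially the same route as the paper, via Theorems~\ref{thm:equivalence} and~\ref{thm:weight}; your extra justification that $\theta_{r-1}$ extended hyperplanes through $T$ forces $T\in\pi_0$ (by intersecting the pencil and counting) fills in a detail the paper leaves implicit.

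Where you genuinely diverge is in the two implications involving~$(iii)$. For $(i)\Rightarrow(iii)$ the paper works in $\PG(2n-1,q)$ under field reduction: the line $m=\langle\cF(P)\cap U,\cF(Q)\cap U\rangle$ must hit the hyperplane $\cF(T)\cap U$ of $U$, hence is a transversal to the regulus $\cF(\subl_q(T,P,Q))$, giving $\subl_q(T,P,Q)=\cB(m)\subseteq\cB(U)$. You instead stay in $\PG(r-1,q^n)$, using the intersection $g=h_P\cap h_Q$ and the fact that $\pi_0/g$ is a $q$-subline of $\pi/\overline g$; this is more elementary and makes no reference to reguli or the spread. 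For $(iii)\Rightarrow(i)$ the paper again uses field reduction, assembling $U$ as a union of transversal lines through a fixed point and then arguing (with $q>2$) that this union is closed under secants and hence a subspace. Your approach is purely algebraic: with affine coordinate placing $T$ at $\infty$, the subline-closure condition becomes closure of $\cA$ under $\F_q$-affine combinations, and the $\lambda=\mu/(\mu-1)$ trick (needing a scalar in $\F_q\setminus\{0,1\}$) upgrades this to $\F_q$-linearity. Both proofs isolate $q>2$ at the same conceptual point, but yours makes the obstruction for $q=2$ more transparent and avoids the geometric bookkeeping in $\PG(2n-1,q)$; the paper's approach, on the other hand, keeps everything in the field-reduction language used in the rest of the article and connects directly to the regulus interpretation of sublines.
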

\begin{proof}

\underline{$(i)\Leftrightarrow (ii)$} The equivalence of the first two statements easily follows from Theorem \ref{thm:equivalence} and Theorem \ref{thm:weight}. Namely, if 
$T\cup{\mathcal A}$ is an ${\mathbb{F}}_q$-club, and $T$ has weight $r-1$, then $T\cup{\mathcal A}$ is a splash of
a subgeometry $\pi_0\cong\PG(r-1,q)$ of $\pi\cong\PG(r-1,q^n)$ on $l_\infty\cong\PG(1,q^n)$. 
%and $T\cup{\mathcal A}$ has size $q^{r-1}+1$. 
By Theorem \ref{thm:weight} there are $(q^{r-1}-1)/(q-1)$ extended hyperplanes of $\pi_0$ through $T$. This implies $T\in {\mathcal{P}}(\pi_0)$. Since all other points have weight one, they lie on exactly one hyperplane of $\pi_0$ and we may conclude that $\pi_0$ is tangent to $l_\infty$. This proves the implication $(i)\Rightarrow (ii)$. Similarly $(ii)\Rightarrow(i)$.

\underline{$(i)\Rightarrow (iii)$} 
%Consider the field reduction map $\cF:=\cF_{1,n,q}$ (see \cite{LaVaPrep}) from $\PG(2,q^n)$ to $\PG(2n-1,q)$, the Desarguesian spread $\cD_{2,n,q}$, and for any set of points $H$ in $\PG(2n-1,q)$ define
%$\cB(H):=\{R\in \cD_{2,n,q}~:~R\cap H\neq \emptyset\}$. We will abuse notation and consider such a set $\cB(H)$ both as a subset of
%$\cD_{2,n,q}$ and as a set of points in $\PG(2,q^n)$. 
Consider the field reduction map $\cF:=\cF_{2,n,q}$.
Suppose $T\cup{\mathcal A}$ is an ${\mathbb{F}}_q$-club, and $T$ has weight $r-1$. This implies that there exists an $(r-1)$-dimensional subspace $U$ in $\PG(2n-1,q)$ such that $\cB(U)=T\cup{\mathcal A}$ and ${\mathrm{dim}}(\cF(T)\cap U)=r-2$. Now consider a subline ${\mathrm{subl}}_q(T,P,Q)$ for two distinct points $P, Q \in \cA$. This subline corresponds to the regulus determined by $\cF(T)$, $\cF(P)$ and $\cF(Q)$ in $\PG(2n-1,q)$. Since ${\mathrm{dim}}(\cF(T)\cap U)=r-2$, the line $m:=\langle \cF(P)\cap U,\cF(Q)\cap U\rangle$ meets $T$ and hence is a transversal to the regulus corresponding to ${\mathrm{subl}}_q(T,P,Q)$. As $m\subset U$ it follows that ${\mathrm{subl}}_q(T,P,Q)=\cB(m) \in \cB(U)=T\cup \cA$.

\underline{$(iii)\Rightarrow (i)$}
Now assume $(iii)$ holds and $q>2$. Choose a point $X \in \cA$ and a point $x$ in $\cF(X)$. Since for each $Y \in \cA\setminus \{X\}$, the $q$-subline $\subl_q(T,X,Y)$ is
contained in $T\cup{\cA}$, there exists a unique line $l_Y$ through $x$ in $\PG(2n-1,q)$ such that $\subl_q(T,X,Y)=\cB(l_Y)$. Let $U$ denote the union of the points on the $(q^{r-1}-1)/(q-1)$ lines defined in this way, i.e.
$$U:=\underset{Y\in \cA\setminus\{X\}}{\bigcup}{\mathcal{P}}(l_Y).$$
Then $|U|=(q^r-1)/(q-1)$ and $(q^{r-1}-1)/(q-1)$ points of $U$ are contained in $\cF(T)$. Moreover $T\cup\cA = \cB(U)$. 
Put $W:=\langle x, \cF(T)\rangle$. Then $W$ has dimension $n$ and $U\subset W$.

Each $\cF(C)$, with $C\in \cA$ intersects $W$ in exactly one point, and since $U$ contains a point of $\cF(C)$, and $U\subset W$,
it must hold $\cF(C)\cap W\in U$. Hence 
\begin{eqnarray}\label{eqn:formula}
\{\cF(C)\cap W~:~C\in \cA\}=U\setminus \cF(T).
\end{eqnarray}

%Let $z$ be any point in $W\setminus \cF(T)$ with $\cB(z)\in \cA$. Then the line $l_{\cB(z)}$ through $x$ is contained in $U$, and since each $\cF(C)$, with $C\in \cA$ intersects $W$ in exactly one point, it follows that $z\in U$. 

%Consider a line $m$, $x\notin m$, spanned by two points $x_1,x_2$ of $U\setminus \cF(T)$. 
Since $r>2$, there is a line $m$, $x\notin m$, spanned by two points $x_1,x_2$ of $U\setminus \cF(T)$. 
%Then by hypothesis $\subl_q(\cB(x_1),\cB(x_2),T)$ belongs $T\cup \cA$. 
Then by hypothesis $\subl_q(\cB(x_1),\cB(x_2),T)$ is contained in $T\cup \cA$. 
Hence the points of $m$ not in $\cF(T)$ are contained in $U$, since 
by (\ref{eqn:formula}), $m\setminus \cF(T)=\cF(\subl_q(\cB(x_1),\cB(x_2),T)\setminus \{T\})\cap W$.

%But then the lines $l_{\cB(x_i)}=\langle x, x_i \rangle$ with $x_i\in m\setminus (m\cap \cF(T))$ 
But then the lines $l_{\cB(y)}=\langle x, y \rangle$ with $y\in m\setminus (m\cap \cF(T))$ 
must be contained in $U$, implying that $U$ contains the affine plane $\langle x , m \rangle \setminus \langle x, m\cap \cF(T)\rangle$.
Since $q>2$, we may repeat the arguments for another line $m'$, lying
in the plane  $\langle x,m\rangle$ and going through $x_1$, and conclude that $U$ contains the plane 
$\langle x , m \rangle$, and this plane meets $\cF(T)$ in a line. In particular we have shown that $U$ contains every line spanned by two points
$x_1,x_2$ of $U\setminus \cF(T)$.
Since $|U|=(q^r-1)/(q-1)$ and $(q^{r-1}-1)/(q-1)$ points of $U$ are contained in $\cF(T)$, it follows that $U$ is an
$(r-1)$-dimensional subspace in $\PG(2n-1,q)$ such that $\cB(U)=T\cup{\mathcal A}$ and ${\mathrm{dim}}(\cF(T)\cap U)=r-2$.
Equivalently, $\cA\cup T$ is an $\Fq$-linear set of rank $r$, and $T$ is a point of weight $r-1$. This proves the implication $(iii)\Rightarrow (i)$.
\end{proof}

\begin{remark}
%If $q=2$, statement $(iii)$ is always satisfied, and not every set of $2^r+1$ points is a linear set. 
If $q=2$, statement $(iii)$ is always satisfied, and not every set of $2^{r-1}+1$ points is a linear set. 
\end{remark}

The linearity gives the advantage of having further almost straightforward consequences
concerning uniqueness and number of tangent splashes.
This allows to generalize the results in \cite[Sect.\ 5]{BaJa14}.
A tangent splash of a $q$-subgeometry of $\PG(r-1,q^n)$ is said to have \textit{rank} $r$
because it is indeed an $\F_q$-linear set of rank $r$. 
For the purpose of the 
following proposition only, a $q$-subline is called a {\it tangent splash of rank 2}
and any point on it is a centre.

\begin{prop}\label{prop:unique}
If $T$, $U_1, \ldots, U_r$ are distinct points in $l_\infty=\PG(1,q^n)$, $3\leq r \leq n$, and 
no $U_j$, $j\geq 3$, is contained in an $\F_q$-tangent splash of rank less than $j$ with centre $T$ containing the points
$U_1,\ldots, U_{j-1}$, then there is a unique tangent splash $S(\pi_0,l_\infty)$ of 
a $q$-subgeometry $\pi_0$ of $\PG(r-1,q^n)$, such that  $S(\pi_0,l_\infty)$ 
contains $U_1,\ldots, U_r$ and has centre $T$. 
\end{prop}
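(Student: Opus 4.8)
The plan is to reduce the statement, via field reduction, to an elementary fact about $\Fq$-subspaces of $\F_{q^n}$, using the normal form for tangent splashes with a prescribed centre. Recall that by Theorem~\ref{thm:zero} a tangent splash of rank $k$ with centre $T$ is the same as an $\Fq$-club of rank $k$ whose point $T$ has weight $k-1$ (and, by the convention adopted here, a $q$-subline with $T$ on it when $k=2$). Applying $\cF=\cF_{2,n,q}$ and putting the defining $(k-1)$-dimensional subspace of $\PG(2n-1,q)$ into standard form, one sees that, after fixing a coordinate $\F_{q^n}$ on $l_\infty\setminus\{T\}$ coming from a projectivity of $l_\infty$ that fixes $T$, the tangent splashes of rank $k$ with centre $T$ are \emph{exactly} the sets $\{T\}\cup D$ with $D$ an $\Fq$-subspace of $\F_{q^n}$ of dimension $k-1$ (cf.\ \cite{LaVV10}); in particular every such splash consists of $T$ together with $q^{k-1}$ affine points forming an $\Fq$-subspace.

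Next I would transport the hypothesis into this picture. Choose the coordinate so that $U_1=0$ and, after a further rescaling $x\mapsto ax$, $U_2=1$, and write $U_j=u_j$, so $u_1=0$, $u_2=1$ and $u_3,\dots,u_r$ are distinct elements of $\F_{q^n}\setminus\{0,1\}$. By the normal form, a tangent splash of rank $k$ with centre $T$ containing $U_1,\dots,U_{j-1},U_j$ corresponds to an $\Fq$-subspace $E$ of $\F_{q^n}$ with $\dim_{\Fq}E=k-1$ and $\{1,u_3,\dots,u_{j-1},u_j\}\subseteq E$, and ``rank $<j$'' means $\dim_{\Fq}E\le j-2$. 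Hence such a splash exists if and only if $\dim_{\Fq}\langle 1,u_3,\dots,u_j\rangle_{\Fq}\le j-2$. Therefore the hypothesis on $U_j$ is equivalent to $\dim_{\Fq}\langle 1,u_3,\dots,u_j\rangle_{\Fq}\ge j-1$, i.e.\ (the span being generated by $j-1$ elements) to the $\Fq$-linear independence of $1,u_3,\dots,u_j$; imposing this for all $j$ with $3\le j\le r$ is equivalent to its instance $j=r$, namely that $1,u_3,\dots,u_r$ are $\Fq$-linearly independent.

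With this in hand the conclusion is immediate. Set $D_0:=\langle 1,u_3,\dots,u_r\rangle_{\Fq}$; by the previous step $\dim_{\Fq}D_0=r-1$, and since $3\le r\le n$ we have $2\le r-1\le n-1$, so $D_0$ is a proper $\Fq$-subspace of dimension at least $2$. Thus $\{T\}\cup D_0$ is a tangent splash of rank $r$ with centre $T$ containing $0,1,u_3,\dots,u_r$, and by Theorem~\ref{thm:equivalence} it equals $S(\pi_0,l_\infty)$ for some $q$-subgeometry $\pi_0$ of $\PG(r-1,q^n)$, which settles existence. For uniqueness, every tangent splash of rank $r$ with centre $T$ through $U_1,\dots,U_r$ is of the form $\{T\}\cup D$ with $D$ an $(r-1)$-dimensional $\Fq$-subspace containing $0,1,u_3,\dots,u_r$, hence $D\supseteq D_0$, and comparison of dimensions forces $D=D_0$.

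I expect the genuinely delicate point to be the first paragraph: establishing the normal form precisely, in particular fitting the boundary case $k=2$ to the convention ``a $q$-subline is a tangent splash of rank $2$, any point a centre'', and checking that \emph{every} $(k-1)$-dimensional $\Fq$-subspace (not merely some) actually occurs and yields a club, i.e.\ that all its non-centre points have weight one. Once this dictionary is available, the remaining steps are routine linear algebra over $\Fq$, and — unlike in Theorem~\ref{thm:zero} — no restriction on $q$ is needed.
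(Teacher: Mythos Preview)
Your proposal is correct and follows essentially the same approach as the paper: both use field reduction to identify rank-$k$ tangent splashes with centre $T$ (containing a fixed point $U_1$) with $(k-1)$-dimensional $\Fq$-structures, and then observe that the hypothesis forces a unique such structure; the paper phrases this geometrically via transversals to the reguli determined by $\cF(T),\cF(U_1),\cF(U_j)$ in $\PG(2n-1,q)$, while you coordinatize $l_\infty\setminus\{T\}$ by $\F_{q^n}$ and argue with $\Fq$-subspaces directly. One small correction to your normal form: for a \emph{fixed} affine coordinate on $l_\infty\setminus\{T\}$ the tangent splashes with centre $T$ are the sets $\{T\}\cup(v+D)$ with $D$ an $\Fq$-subspace and $v\in\F_{q^n}$ (cosets, not subspaces in general) --- but since you set $U_1=0$ and only ever consider splashes through $U_1$, these cosets contain $0$ and are linear, so your argument is unaffected.
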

\begin{proof}
Consider the field reduction map $\cF:=\cF_{2,n,q}$. Let $u_1$ be a point of $\cF(U_1)$, and suppose
$T\cup \cA$ is a tangent splash of rank $r$ with centre $T$ and containing the points $U_1, \ldots, U_r$. Then
$T\cup \cA$ is a linear set, say $\cB(W)$, with $W$ an $(r-1)$-dimensional subspace of $\PG(2n-1,q)$, which
intersects $\cF(T)$ in a subspace of dimension $r-2$. By \cite[Lemma 4.3]{LaVaPrep}
we may assume $u_1 \in W$.
Put $u_i:=W\cap U_i$, for $i=2,3,\ldots, r$.
Since each line $\langle u_1,u_j\rangle$ is contained in $W$ and meets $\cF(T)$, it follows that 
$W$ must contain the unique transversal through $u_1$ to the regulus determined by 
$\cF(T),\cF(U_1),\cF(U_j)$, $j\neq 1$.
By the assumption that no $U_j$, $j\geq 3$, is contained in a tangent splash of rank $j-1$ with centre $T$ containing the points $U_1,\ldots, U_{j-1}$, it follows that the subspace $\langle u_1, u_2, \ldots, u_r \rangle$ has dimension
$r-1$, and hence must equal $W$. By Theorem \ref{thm:zero} this implies both existence and uniqueness.
\end{proof}

In the case of an $\F_q$-subplane tangent splash, as a corollary of the  Proposition \ref{prop:unique}, the following generalization of \cite[Theorem 5.1]{BaJa14} is obtained.

\begin{theorem}\label{thm:enumeration1}
  If $T$, $U$, $V$, and $W$ are distinct points in $\PG(1,q^n)$, and
  $W\not\in\subl_q(T,U,V)$, then a unique tangent splash of a $q$-subplane exists which contains
  $U$, $V$, $W$ and has centre $T$.
\end{theorem}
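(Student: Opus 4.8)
The plan is to derive Theorem~\ref{thm:enumeration1} directly from Proposition~\ref{prop:unique} by specializing to the case $r=3$. First I would observe that for $r=3$ the hypothesis of Proposition~\ref{prop:unique} asks that no $U_j$ with $j\ge 3$ — so here only $U_3$ — lies in a tangent splash of rank less than $3$ with centre $T$ containing $U_1, U_2$. A tangent splash of rank less than $3$ with centre $T$ is, by the convention introduced just before Proposition~\ref{prop:unique}, either a $q$-subline (rank $2$) with $T$ on it, or a rank-$1$ object (a single point), and the unique rank-$2$ tangent splash through $T, U_1, U_2$ is precisely $\subl_q(T, U_1, U_2)$. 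So the condition ``$U_3$ is not contained in a tangent splash of rank $<3$ with centre $T$ through $U_1, U_2$'' is literally the condition $W \notin \subl_q(T, U, V)$ once we rename $(U_1, U_2, U_3) = (U, V, W)$.

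The key steps in order: (1) set $r=3$ and identify $U_1 = U$, $U_2 = V$, $U_3 = W$; (2) note that $T, U, V, W$ distinct is exactly the hypothesis ``$T, U_1, \dots, U_r$ distinct points'' of Proposition~\ref{prop:unique}; (3) verify that the only tangent splashes of rank $<3$ with centre $T$ containing $U$ and $V$ are sublines through $T, U, V$, of which there is a unique one, namely $\subl_q(T,U,V)$ (uniqueness of the $q$-subline through three collinear points was recalled at the start of Section~\ref{sect:Characterization}), together with possibly degenerate rank-$\le 1$ objects which certainly cannot contain three distinct points; (4) conclude that $W \notin \subl_q(T,U,V)$ is equivalent to the hypothesis of Proposition~\ref{prop:unique}; (5) apply Proposition~\ref{prop:unique} to obtain a unique tangent splash of a $q$-subgeometry $\pi_0$ of $\PG(2,q^n)$ — i.e.\ a $q$-subplane, since $r-1 = 2$ — containing $U, V, W$ with centre $T$.

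I do not expect a serious obstacle here; the theorem is essentially a translation of Proposition~\ref{prop:unique} into the language of subplanes, the main content having already been absorbed into that proposition and into Theorem~\ref{thm:zero}. The one point requiring a sentence of care is the bookkeeping in step~(3): one must make sure that the quantifier ``no $U_j$, $j\ge 3$'' genuinely collapses to the single constraint on $W$, and that ``rank less than $j$'' for $j=3$ means rank $2$ or the degenerate rank-$\le 1$ case, so that the relevant forbidden configurations are exactly the $q$-sublines on $T, U, V$. Since three distinct collinear points determine a unique $q$-subline, this identifies the forbidden locus as the single subline $\subl_q(T,U,V)$, and the equivalence of hypotheses is then immediate. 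Everything else is a direct invocation of Proposition~\ref{prop:unique}.
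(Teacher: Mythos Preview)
Your proposal is correct and follows exactly the paper's approach: the theorem is stated there as an immediate corollary of Proposition~\ref{prop:unique} specialized to $r=3$, with no further argument given. Your steps (1)--(5) simply spell out the bookkeeping that the paper leaves implicit.
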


The following proposition gives the number of tangent splashes on $\PG(1,q^n)$ obtained from
order $q$-subgeometries in $\PG(r-1,q^n)$. This proposition generalizes \cite[Theorem 5.2]{BaJa14}.
\begin{prop}
  Let $r\ge3$.  
  \begin{enumerate}
  \item[$(i)$] The number of distinct rank $r$ tangent splashes of $q$-subgeometries on $\PG(1,q^n)$ 
  having a common centre $T$ is
  \begin{equation}\label{e:distinct-ts-common-centre}
   q^{n+1-r}\prod_{i=0}^{r-2}\frac{\displaystyle q^{n-i}-1}{\displaystyle q^{r-1-i}-1}.
  \end{equation}
  \item[$(ii)$] The number of distinct rank $r$ tangent splashes  of $q$-subgeometries on $\PG(1,q^n)$ is
  \begin{equation}\label{e:distinct-ts}
    (q^n+1)q^{n+1-r}\prod_{i=0}^{r-2}\frac{\displaystyle q^{n-i}-1}{\displaystyle q^{r-1-i}-1}.
  \end{equation}
  \end{enumerate}
\end{prop}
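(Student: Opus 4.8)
The plan is to use the bijective correspondence, established in Proposition \ref{prop:unique} and Theorem \ref{thm:zero}, between rank $r$ tangent splashes with a fixed centre $T$ and certain $(r-1)$-dimensional subspaces $W$ of $\PG(2n-1,q)$ meeting the spread element $\cF(T)$ in a subspace of dimension $r-2$. More precisely, I would fix the centre $T$ and work inside $\PG(2n-1,q)$ with the Desarguesian spread $\cD=\cD_{2,n,q}$. A rank $r$ tangent splash with centre $T$ is $\cB(W)$ for an $(r-1)$-space $W$ with $\dim(W\cap \cF(T))=r-2$; write $W\cap\cF(T)=\Sigma$, an $(r-2)$-dimensional subspace of the $(n-1)$-space $\cF(T)$. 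The key point — which must be extracted from the linearity results above — is \emph{when two such subspaces $W$, $W'$ give the same splash}: namely, $\cB(W)=\cB(W')$ precisely when $W$ and $W'$ lie in the same orbit under the stabiliser in $\PGL(2n,q)$ of the spread $\cD$ fixing $\cB(W)$, and one shows this forces $\Sigma=\Sigma'$ and then $W=W'$ up to the action of the (multiplicative) group of field automorphisms acting trivially on the spread, i.e. up to the scalars $\F_{q^n}^\ast$ acting on $\cF(T)$. So the count splits as: choose $\Sigma$ inside $\cF(T)$; then count $(r-1)$-spaces $W$ through $\Sigma$ with $\dim(W\cap\cF(T))=r-2$ exactly (equivalently $W\not\subset\cF(T)$), modulo the residual identification.

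Concretely I would carry out the following steps. First, count the number of $(r-2)$-dimensional subspaces $\Sigma$ of the $(n-1)$-space $\cF(T)\cong\PG(n-1,q)$; this is the Gaussian binomial $\gbinom{n}{r-1}_q$. Second, having fixed $\Sigma$, pass to the quotient $\PG(2n-1,q)/\Sigma\cong\PG(2n-r,q)$, in which $\cF(T)/\Sigma$ is a point; an $(r-1)$-space $W\supseteq\Sigma$ not contained in $\cF(T)$ corresponds to a line in the quotient through that point other than... no — $W/\Sigma$ is a $1$-space (a point) in the quotient, distinct from the point $\cF(T)/\Sigma$; the number of such is $\theta_{2n-r}-1=(q^{2n-r}-1)/(q-1)$. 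Third — and this is where the identifications come in — determine how many of these $W$ yield the \emph{same} splash: the stabiliser of the splash induces on $\cF(T)$ the full group $\F_{q^n}^\ast$ of scalars, which fixes $\Sigma$ setwise only if $\Sigma$ is $\F_{q^d}$-linear for suitable $d\mid n$; generically the scalar group acts on the fibre of $W$'s over a fixed splash, and one obtains that each splash is counted $(q^n-1)/(q-1)\cdot(\text{something})$ times. Assembling these via the orbit–counting identity should collapse the product of Gaussian binomials and the factor $(q^{2n-r}-1)/(q-1)$ into the stated expression
\[
q^{\,n+1-r}\prod_{i=0}^{r-2}\frac{q^{n-i}-1}{q^{r-1-i}-1},
\]
after which part $(ii)$ follows immediately by multiplying by $q^n+1$, the number of choices of centre $T$ in $\PG(1,q^n)$, since distinct centres give distinct splashes (the centre is the unique point of weight $r-1$).

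The main obstacle I anticipate is Step 3: pinning down exactly the over-counting factor, i.e. proving that the map $W\mapsto\cB(W)$ from the set of admissible $(r-1)$-spaces to the set of rank $r$ tangent splashes with centre $T$ has all fibres of the same size and computing that size. One must rule out coincidences $\cB(W)=\cB(W')$ coming from non-obvious collineations of $\PG(2n-1,q)$ preserving the spread, and handle the dependence of the scalar-group action on the $\F_q$-linear structure of $\Sigma$ carefully — it is tempting but wrong to assume the action on the fibre is always free. I would address this by invoking Proposition \ref{prop:unique} directly: a tangent splash of rank $r$ is determined by its centre together with $r$ points $U_1,\dots,U_r$ in "general position" relative to the tangent-splash tower, and I would instead count (ordered, then unordered) such point-configurations and divide, which sidesteps the quotient-space bookkeeping and reduces the whole computation to iterated applications of the "no $U_j$ lies in a lower-rank tangent splash through the previous ones" condition; each stage then contributes a clean factor of the form $q^n+1$ or $q^{\,n}-q^{\,j-1}$ over the corresponding number of points in a rank $j$ splash, and the alternating product telescopes to the claimed formula.
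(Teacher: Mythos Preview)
Your final paragraph is the right approach and is exactly what the paper does: count ordered $r$-tuples $(U_1,\ldots,U_r)$ of points of $\PG(1,q^n)\setminus\{T\}$ satisfying the ``general position'' condition of Proposition~\ref{prop:unique}, obtaining
\[
K=q^n(q^n-1)(q^n-q)\cdots(q^n-q^{r-2}),
\]
and then count the same tuples inside a fixed rank $r$ tangent splash with centre $T$ (which has $q^{r-1}$ non-centre points), obtaining
\[
q^{r-1}(q^{r-1}-1)(q^{r-1}-q)\cdots(q^{r-1}-q^{r-2}).
\]
Dividing yields (\ref{e:distinct-ts-common-centre}), and multiplying by $q^n+1$ gives (\ref{e:distinct-ts}). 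So your ultimate plan is correct and coincides with the paper's proof.

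The bulk of your proposal, however, pursues the subspace-counting route in $\PG(2n-1,q)$, and you are right to flag Step~3 as the obstacle: determining the fibre size of $W\mapsto\cB(W)$ is genuinely delicate. Two different $(r-1)$-spaces $W,W'$ giving the same club need not have $W\cap\cF(T)=W'\cap\cF(T)$, so your decomposition ``first choose $\Sigma$, then choose $W\supset\Sigma$'' does not respect the equivalence, and the orbit sizes under the $\F_{q^n}^*$-action on $\Sigma$ really do depend on the $\F_q$-linear structure of $\Sigma$. This can be salvaged (the proof of Proposition~\ref{prop:unique} shows that, once a point $u_1\in\cF(U_1)$ is chosen, $W$ is uniquely determined, so the fibre has size exactly $|\cP(\cF(U_1))|=(q^n-1)/(q-1)$), but it is extra work. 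The paper bypasses all of this by going straight to the double count you describe at the end; you should do the same and drop the field-reduction detour.
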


\begin{proof}
A tangent splash of rank $i$ has $1+q^{i-1}$ points.
(\textit i) 
The number of $r$-tuples $(U_1,U_2,\ldots,U_r)$ satisfying the assumptions of prop.\ \ref{prop:unique}
is
\begin{equation}\label{e:dc1}
  K=q^{n}\cdot(q^{n}-1)\cdot(q^{n}-q)\cdot\ldots\cdot(q^{n}-q^{r-2}).
\end{equation}
If $N$ is the number of tangent splashes with center $T$, then
\begin{equation}\label{e:dc2}
  K=Nq^{r-1}\cdot(q^{r-1}-1)\cdot(q^{r-1}-q)\cdot\ldots\cdot(q^{r-1}-q^{r-2}).
\end{equation}
Equations (\ref{e:dc1}) and (\ref{e:dc2}) imply (\ref{e:distinct-ts-common-centre}).
The total number of tangent splashes of rank $r$ is $(q^n+1)N$ and this proves (\ref{e:distinct-ts}).
\end{proof}

\section{Projective equivalence of tangent splashes}\label{sec:projective_equivalence}

\begin{prop}\label{prop:alg_description_TS}
  Let $T\cup{\cal A}$ be a tangent splash of the $q$-subgeometry $\pi_0$ 
  of $\PG(r-1,q^n)$ on the line $l_\infty=\PG(U)$, with centre $T$.
  Let $P$ be any point of $\cal A$.
  Then there exist $u,v\in U$ and $\rho_1,\rho_2,\ldots,\rho_{r-2}\in\mathbb F_{q^n}$, such that
  $1$, $\rho_1$, $\rho_2$, $\ldots$, $\rho_{r-2}$ are linearly independent over $\F_q$,
  $\langle v\rangle_{q^n}=P$, and
  \begin{eqnarray}\label{eqn:alg_description_TS}
    {\cal A}=\left\{\left\langle xu+\sum_{i=1}^{r-2}y_i\rho_i u+v\right\rangle_{q^n}\mid 
    x,y_i\in\mathbb F_q,\,i=1,2,\ldots,r-2\right\}.
  \end{eqnarray}
\end{prop}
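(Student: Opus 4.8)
The plan is to translate the tangent splash into $\PG(2n-1,q)$ via the field reduction map $\cF:=\cF_{2,n,q}$, use Theorem~\ref{thm:zero} to get a concrete $(r-1)$-dimensional subspace $U'$ of $\PG(2n-1,q)$ with $\cB(U')=T\cup\cal A$ and $\dim(\cF(T)\cap U')=r-2$, and then read off the coordinate description by exploiting the semilinear structure of field reduction. First I would fix a point $P\in\cal A$ and, by Theorem~\ref{thm:zero} together with \cite[Lemma~4.3]{LaVaPrep} (as used in the proof of Proposition~\ref{prop:unique}), choose $U'$ so that it contains a prescribed point $v_0\in\cF(P)$; concretely, picking $v\in U$ with $\langle v\rangle_{q^n}=P$, the vector $v$ (viewed inside the $\F_q$-space underlying $\PG(2n-1,q)$ after identifying $U$ with $\F_{q^n}^2\cong\F_q^{2n}$) can be taken to lie in the $(r-1)$-dimensional $\F_q$-subspace $W$ with $\cB(\PG(W))=T\cup\cal A$.

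Next I would pin down $W$ concretely. Since $\dim_{\F_q}(W\cap \cF(T))=r-1$ inside a subspace $\cF(T)$ of projective dimension $n-1$, i.e. an $\F_q$-subspace of dimension $n$, and since $\cF(T)$ is exactly the rank-one $\F_q$-subspace $Tu:=\{xu\mid x\in\F_{q^n}\}$ for a suitable $u\in U$ with $\langle u\rangle_{q^n}=T$, I would write $W=\langle v\rangle_{\F_q}\oplus (W\cap Tu)$, where $W\cap Tu$ is an $\F_q$-subspace of $Tu\cong\F_{q^n}$ of $\F_q$-dimension $r-2$. Choosing an $\F_q$-basis of the corresponding subspace of $\F_{q^n}$ and scaling so that $1$ is a spanning element together with $\rho_1,\ldots,\rho_{r-2}$ — here I use $r-2\le n-1$, so the subspace of $\F_{q^n}$ has $\F_q$-dimension $r-1\le n$ and can be taken to contain $1$ after an $\F_{q^n}$-scaling of $u$, which is harmless — gives $W\cap Tu=\{(x+\sum_i y_i\rho_i)u\mid x,y_i\in\F_q\}$ with $1,\rho_1,\ldots,\rho_{r-2}$ $\F_q$-independent. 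Then $W=\{xu+\sum_i y_i\rho_i u+zv\mid x,y_i\in\F_q,\ z\in\F_q\}$, and the points of $\cB(\PG(W))$ other than $T$ are exactly those $\langle w\rangle_{q^n}$ with $w\in W$ having nonzero $v$-component; scaling $z=1$ yields formula~\eqref{eqn:alg_description_TS}.

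The main thing to be careful about — and the only real obstacle — is the normalization that lets $1$ appear among the $\rho_i$'s and as the coefficient of $u$: one must verify that the $(r-2)$-dimensional $\F_q$-subspace of $\F_{q^n}$ coming from $W\cap Tu$, together with the scalar corresponding to $v$'s $u$-free representation, can simultaneously be arranged so that the relevant $(r-1)$ scalars are $1,\rho_1,\ldots,\rho_{r-2}$ with $1$ genuinely in the list and the whole set $\F_q$-independent. This is where the hypothesis $r\le n$ is used (so that $r-1$ $\F_q$-independent elements of $\F_{q^n}$ exist), and where replacing $u$ by $\lambda u$ for suitable $\lambda\in\F_{q^n}^\ast$ and correspondingly adjusting $v$ does the bookkeeping. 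Everything else is a routine unwinding of the correspondence between points of $\cB(\PG(W))$ and nonzero $\F_q$-scalar classes of vectors in $W$, already implicit in the proof of Theorem~\ref{thm:zero}; I would present it compactly rather than in full detail.
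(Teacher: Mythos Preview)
Your approach is correct and essentially the same as the paper's, modulo a typo (the $\F_q$-dimension of $W\cap Tu$ is $r-1$, not $r-2$, as you in fact use two lines later when you give it the basis $u,\rho_1u,\ldots,\rho_{r-2}u$). The normalization you call ``the only real obstacle'' disappears if you reverse the order of choices as the paper does: first pick any nonzero $u\in V$ with $T=\langle u\rangle_{q^n}$, so that $u=1\cdot u\in V$ forces $1$ among the coefficients automatically, and only then pick $v\in V$ representing $P$; this also makes the appeal to \cite[Lemma~4.3]{LaVaPrep} unnecessary.
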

\begin{proof}
  By theorem \ref{thm:zero},  there is an
  $r$-dimensional $\mathbb F_q$-subspace of $U$, say $V$, such that
  $T\cup{\cal A}={\cal B}(V)$.
  From $\wt(T)=r-1$ it follows that some $u\in U$ and  
  $\rho_1,\rho_2,\ldots,\rho_{r-2}\in\mathbb F_{q^n}$, exist such that
  $1$, $\rho_1$, $\rho_2$, $\ldots$, $\rho_{r-2}$ are linearly independent over $\F_q$;
   $u,\rho_i u\in V$, $i=1,2,\ldots,r-2$, and $T=\langle u\rangle_{q^n}$.
  Taking the vector $v\in V$ such that $\langle v\rangle_{q^n}=P$ yields
  \[
    T\cup{\cal A}=\left\{\left\langle x'u+\sum_{i=1}^{r-2}y_i'\rho_i u+z'v\right\rangle_{q^n}\mid 
    x',y'_i,z'\in\mathbb F_q,\,i=1,2,\ldots,r-2\right\},
  \]
  and this implies (\ref{eqn:alg_description_TS}).
\end{proof}

\begin{prop}\label{prop:u_i}
  Let $\pi_0$ be a $q$-subgeometry of
  $\PG(W)\cong\PG(r-1,q^n)$ and let $l_\infty=\PG(U)$ be a line
  of $\PG(W)$ tangent to $\pi_0$.
  Assume that $S(\pi_0,l_\infty)=T\cup{\cal A}$, and that the notation in prop.\ \emph{\ref{prop:alg_description_TS}} holds.  
  Let $H_0$ be the hyperplane of $\pi_0$ such that $P\in\overline{H_0}$.
  Then an ordered $(r-1)$-tuple\footnote{Here $(r-1)$-tuples are considered as column vectors.} 
  $s=(s_0\ s_1\ \ldots\ s_{r-2})^T\in W^{r-1}$ exists such that
  (i)~$H_0=\cB(\langle s_0,s_1,\ldots,s_{r-2}\rangle_{q})$, 
  (ii)~$v=s_0+\sum_{i=1}^{r-2}\rho_i s_i$, and
  (iii)~$\pi_0={\cal B}\left(\langle u,s_0,\ldots,s_{r-2}\rangle_{q}\right)$.
  If $\gcd(n,r-1)=1$, then there is a unique $s\in W^{r-1}$ satisfying (i) and (ii).
\end{prop}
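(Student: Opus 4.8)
The plan is to exploit the algebraic description in Proposition \ref{prop:alg_description_TS} together with the fact that $P$ lies on a unique hyperplane $\overline{H_0}$ of $\pi_0$, and to track how the coordinates of the vector $v$ distribute over the generators of $\pi_0$. First I would fix the $\F_q$-subspace $V\subseteq U$ with $T\cup\cal A=\cal B(V)$ coming from Theorem \ref{thm:zero}, and the vectors $u,\rho_i u\in V$, $v\in V$ from Proposition \ref{prop:alg_description_TS}. The key point is that $\pi_0=\cal B(M)$ for some $r$-dimensional $\F_q$-subspace $M$ of $W$ with $M\cap U=V$ (since $l_\infty$ is tangent to $\pi_0$, $\dim(M\cap U)=1$ as an $\F_{q^n}$-object but $r$ as an $\F_q$-object — more precisely $V$ is the $\F_q$-span of $u,\rho_1u,\dots,\rho_{r-2}u,v$ inside $M$). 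Then $H_0=\cB(M_0)$ for a hyperplane $M_0$ of the subgeometry, i.e.\ an $(r-1)$-dimensional $\F_q$-subspace $M_0\subset M$, and the condition $P\in\overline{H_0}$ forces $v\in\overline{M_0}=M_0\otimes_{\F_q}\F_{q^n}$.

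Next I would choose an $\F_q$-basis. Since $T=\langle u\rangle_{q^n}$ has weight $r-1$, we have $u\in M$ but $u$ together with $M_0$ spans $M$ over $\F_q$ (because $T\notin\cal A\subseteq\cB(M_0)$ would follow otherwise, contradicting weight considerations); so $M=\langle u\rangle_q\oplus M_0$. Pick any $\F_q$-basis $s_0,s_1,\dots,s_{r-2}$ of $M_0$, giving (i) immediately. Now $v\in\overline{M_0}$ means $v=\sum_{i=0}^{r-2}\lambda_i s_i$ with $\lambda_i\in\F_{q^n}$; the freedom in choosing the basis of $M_0$ (via an element of $\GL(r-1,q)$) lets me rewrite this. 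The claim (ii) $v=s_0+\sum_{i=1}^{r-2}\rho_i s_i$ says we can arrange $\lambda_0=1$ and $\lambda_i=\rho_i$. To see $\lambda_0\neq0$, note if $\lambda_0=0$ then $v\in\langle s_1,\dots,s_{r-2}\rangle_{q^n}$, an $(r-2)$-dimensional $\F_{q^n}$-space; but then $V=\langle u,\rho_1u,\dots,\rho_{r-2}u,v\rangle_q$ would... — I'd verify this leads to $\cal A$ having the wrong rank or $T$ not on enough hyperplanes. Rescaling $s_0$ by $\lambda_0^{-1}$ we may take $\lambda_0=1$; then replacing $s_i$ by $s_i+(\lambda_i-\rho_i)s_0$ for $i\geq1$ (an $\F_q$-change of basis of $M_0$ only if $\lambda_i-\rho_i\in\F_q$, which need not hold). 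So instead I'd argue directly: the $\F_q$-space $V\subseteq M$ contains $u$, $\rho_iu$, and $v$; since $M=\langle u\rangle_q\oplus M_0$ and $\rho_iu\in V\subseteq M$, writing $\rho_iu=\mu_iu+w_i$ with $w_i\in M_0$, and using that $v$ has an honest nonzero component along every $\rho_i$-direction, I would solve for an appropriate basis. The cleanest route is to define $s_i$ by declaring (ii) and (iii) to hold and checking consistency via dimension counts, using that $\langle u,v,\rho_1u,\dots,\rho_{r-2}u\rangle_q=V$ has $\F_q$-dimension $r$ and $\langle u,s_0,\dots,s_{r-2}\rangle_q$ must recover $M$.

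For the uniqueness statement under $\gcd(n,r-1)=1$: suppose $s$ and $s'=(s_0',\dots,s_{r-2}')^T$ both satisfy (i) and (ii). From (i), both are $\F_q$-bases of the same $M_0$, so $s'=As$ for some $A\in\GL(r-1,q)$. From (ii), $s_0+\sum\rho_is_i=v=s_0'+\sum\rho_is_i'$, which upon substituting $s'=As$ gives a linear relation with coefficients in $\F_q[\rho_1,\dots,\rho_{r-2}]$ among the (over $\F_{q^n}$ independent, since they're an $\F_q$-basis of an $(r-1)$-dim space but possibly dependent over $\F_{q^n}$ — here is where care is needed) vectors $s_i$. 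I would set this up as: $(A-I)$ applied to $s$, paired against the row vector $(1,\rho_1,\dots,\rho_{r-2})$, must annihilate. The matrix $A$ is $\F_q$-rational; $(1,\rho_1,\dots,\rho_{r-2})(A-I)=0$ as a vector over $\F_{q^n}$ would express the $\F_q$-independence of $1,\rho_1,\dots,\rho_{r-2}$, forcing $A=I$ — but only if $s_0,\dots,s_{r-2}$ are $\F_{q^n}$-independent, which fails in general. The real mechanism must be that the $\F_{q^n}$-span of $M_0$ is the hyperplane $\overline{H_0}$, whose dimension over $\F_{q^n}$ is $r-1$, so the $s_i$ \emph{are} $\F_{q^n}$-independent. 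Then $(A-I)^T(1,\rho_1,\dots,\rho_{r-2})^T=0$; writing $A-I=(a_{ij})$, each row gives $\sum_j a_{ji}\rho_{j}=0$ (with $\rho_0:=1$), and $\F_q$-independence of the $\rho_j$ forces all $a_{ji}=0$, i.e.\ $A=I$. The condition $\gcd(n,r-1)=1$ is presumably what guarantees that $1,\rho_1,\dots,\rho_{r-2}$ can be (or are forced to be) taken inside a subfield interacting rigidly — I'd expect it enters precisely to upgrade "some basis works" to "the normalized choice is unique," perhaps by ruling out a nontrivial $\F_q$-automorphism permuting the $\rho_i$-structure.

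\textbf{Main obstacle.} The hard part will be the bookkeeping of which changes of $\F_q$-basis of $M_0$ preserve the normalized form (ii): an arbitrary $A\in\GL(r-1,q)$ rescrambles the $\rho_i$-coefficients into $\F_{q^n}$-combinations, so showing \emph{existence} of a basis achieving exactly $(1,\rho_1,\dots,\rho_{r-2})$ requires knowing that the map $M_0\to\overline{M_0}$, $s\mapsto\sum\rho_is_i$ type combination, hits $v$ with the right coordinates — equivalently that $v$ and the $\rho_iu$ generate a "generic" position. And the role of $\gcd(n,r-1)=1$ in uniqueness is the subtle point: I would need to identify exactly where a common factor of $n$ and $r-1$ would allow a nontrivial $\F_q$-linear automorphism fixing $M_0$, $v$, and the $\rho_i$-grading, destroying uniqueness — likely via a Frobenius-type argument on an $\F_{q^{r-1}}$-structure sitting inside $\F_{q^n}$ when $(r-1)\mid n$.
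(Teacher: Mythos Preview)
Your existence argument rests on a confusion between the two $\F_q$-subspaces in play. You write $M\cap U=V$, where $M\subseteq W$ defines $\pi_0$ and $V\subseteq U$ defines the club $T\cup\cA$; but since $l_\infty$ is tangent to $\pi_0$ at the single point $T$, one has $M\cap U=\F_q u$, which is one-dimensional over $\F_q$. In particular $\rho_i u\notin M$: the vectors $\rho_i u$ live in $V\subset U$, not in the subspace associated to the subgeometry. This makes the subsequent plan (``writing $\rho_iu=\mu_iu+w_i$ with $w_i\in M_0$'') vacuous. What is missing is the link between the $\rho_i$'s and the coefficients of $v$ in an arbitrary $\F_q$-basis of $M_0$: if $z_0,\ldots,z_{r-2}$ is any such basis and $v=\sum\xi_j z_j$ with $\xi_j\in\F_{q^n}$, one must show $\langle\xi_0,\ldots,\xi_{r-2}\rangle_q=\langle 1,\rho_1,\ldots,\rho_{r-2}\rangle_q$. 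The paper obtains this by expressing that each point of $\cA$ lies on a hyperplane of $\pi_0$ through $r-1$ points on the lines $\langle T,\langle z_j\rangle_{q^n}\rangle$, translating this into the vanishing of an $r\times r$ determinant, and reading off the identity. Only then can a matrix $A\in\GL(r-1,q)$ transport the $z_j$'s to the required $s_i$'s.

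For uniqueness, you assume that condition (i) forces $s$ and $s'$ to be $\F_q$-bases of the \emph{same} $\F_q$-subspace $M_0$. It does not: the equality $\cB(\langle s_0,\ldots,s_{r-2}\rangle_q)=H_0$ determines the $\F_q$-span only up to a scalar $\zeta\in\F_{q^n}^*$, since $M_0$ and $\zeta M_0$ yield the same subgeometry. The correct relation is therefore $\zeta s'=Ms$ for some $\zeta\in\F_{q^n}^*$ and $M\in\GL(r-1,q)$, and combining with (ii) gives $M^T\rho=\zeta\rho$ where $\rho=(1,\rho_1,\ldots,\rho_{r-2})^T$. This is exactly why your argument---which would prove uniqueness without any hypothesis on $\gcd(n,r-1)$---cannot be right, as you yourself suspected. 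The eigenvalue $\zeta$ of the $\F_q$-matrix $M^T$ need not lie in $\F_q$; its degree $e=[\F_q(\zeta):\F_q]$ divides $n$ (since $\zeta\in\F_{q^n}$) and divides $r-1$ (since the Frobenius conjugates $\rho,\rho^q,\ldots,\rho^{q^{r-2}}$ are $\F_{q^n}$-independent eigenvectors of $M^T$ filling $\F_{q^n}^{\,r-1}$). The hypothesis $\gcd(n,r-1)=1$ then forces $e=1$, hence $\zeta\in\F_q$, $M=\zeta I_{r-1}$, and $s'=s$. You correctly sensed a Frobenius mechanism, but it only becomes visible once the scalar $\zeta$ is retained.
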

\begin{proof}
  \emph{Existence.}
  Since $T=\langle u\rangle_{q^n}$, an $r$-dimensional $\F_q$-subspace $V_0$ of $W$
  exists such that $\cB(V_0)=\pi_0$ and $u\in V_0$.
  With $r-1$ independent points of $H_0$, vectors
  $z_0,z_1,\ldots,z_{r-2}\in V_0$
  are associated,
  hence $\langle u,z_0,z_1,\ldots,z_{r-2}\rangle_{q}=V_0$, and
  $v,z_0,z_1\ldots,z_{r-2}$ are linearly dependent on $\F_{q^n}$.
%  Clearly $H_0=\cB(\langle z_0,z_1,\ldots,z_{r-2}\rangle_{q})$.
  As a consequence $\xi_0,\xi_1,\ldots,\xi_{r-2}\in\F_{q^n}$ exist such that
  $v=\sum_{j=0}^{r-2}\xi_jz_j$.
  For any $j$ let $l_j$ be the line joining $T$ and $\langle z_j\rangle_{q^n}$.
  Since any point in $\cal A$ lies on a 
  hyperplane of $\PG(r-1,q^n)$ joining $r-1$ points $P_j$, with $P_j\in \pi_0\cap l_j\setminus\{T\}$,
  $j=0,1,\ldots,r-2$, for any $x,y_1,\ldots,y_{r-2}\in\F_q$ there exist $\alpha_j\in\F_q$,  
  $j=0,1,\ldots,r-2$, such that the vectors
  $xu+\sum_{i=1}^{r-2}y_i\rho_i u+v$, $\alpha_0u+z_0$, $\ldots$, $\alpha_{r-2}u+z_{r-2}$
  are linearly dependent over $\F_{q^n}$.
  Since $u$, $z_0$, $z_1$, $\ldots$, $z_{r-2}$ are linearly independent over $\F_{q^n}$,
  \[
    \det\begin{pmatrix}x+\sum_{i=1}^{r-2}y_i\rho_i &\xi_0&\xi_1&\ &\ldots&\ &\xi_{r-2}\\ 
    \alpha_0&1&0&&\ldots&&0\\ \alpha_1&0&1&&\ldots&&0\\ 
    \vdots&&&&&\\ \alpha_{r-2}&0&0&&\ldots&&1
    \end{pmatrix}=0,
  \]
  whence $\langle 1,\rho_1,\ldots,\rho_{r-2}\rangle_{q}=\langle\xi_0,\xi_1,\ldots,\xi_{r-2}\rangle_{q}$, and
  an $A\in\GL(r-1,q)$ exists such that
  \[(\xi_0\ \xi_1\ \ldots\ \xi_{r-2})=(1\ \rho_1\ \ldots\ \rho_{r-2})A.\]
  By defining $s=(s_0\ s_1\ \ldots\ s_{r-2})^T=A(z_0\ z_1\ \ldots\ z_{r-2})^T$,
  $(i)$ and $(iii)$ are straightforward.
  Furthermore,
  \[
    v=(\xi_0\ \xi_1\ \ldots\ \xi_{r-2})(z_0\ z_1\ \ldots\ z_{r-2})^T=(1\ \rho_1\ \ldots\ \rho_{r-2})s
  \]
  and this is $(ii)$.
  
  \emph{Uniqueness.}
  Let $\rho=(1\ \rho_1\ \ldots\ \rho_{r-2})^T$.
  Assume that \[s=(s_0\ s_1\ \ldots\ s_{r-2})^T,\,s'=(s'_0\ s'_1\ \ldots\ s'_{r-2})^T\in W^{r-1}\] satisfy
  $H_0={\cal B}\left(\langle s_0,s_1,\ldots,s_{r-2}\rangle_{q}\right)
  ={\cal B}\left(\langle s'_0,s'_1,\ldots,s'_{r-2}\rangle_{q}\right)$, and
  $v=\rho^Ts=\rho^Ts'$.
  If $V$ and $V'$ are subspaces of $W$ such that $\cB(V)=\cB(V')$ is a $q$-subgeometry $K_0$ of $\PG(W)$, 
  then the related projective subspaces in $\PG(rn-1,q)$ are subspaces of the same family of maximal
  subspaces of the Segre variety $\cF(K_0)$
  (see \cite[Theorem 2.4]{LaVaPrep}), and a $\zeta\in\F_{q^n}^*$ exists such that
  $\zeta V'=V$. Therefore, the assumptions imply that a $\zeta\in\F^*_{q^n}$
  and an $M\in\GL(r-1,q)$ exist such that
  \begin{equation}\label{eqn:zeta}\zeta s'=Ms.\end{equation}
  From $\zeta\rho^Ts'=\zeta\rho^Ts$ and (\ref{eqn:zeta}) one obtains
  $\zeta\rho^Ts=\rho^TMs$.
  Since $s_0,s_1,\ldots,s_{r-2}$ are linearly independent vectors on $\F_{q^n}$,
  the last equation implies $M^T\rho=\zeta\rho$.
  As a consequence, for any $j\in\mathbb N$, $\rho^{q^j}$ is an eigenvector of $M^T$
  related to the eigenvalue $\zeta^{q^j}$.
  The dimension $d_j$ of the related eigenspace $\{w\in\F_{q^n}^{r-1}\,:\, M^Tw=\zeta^{q^j}w\}$
  does not depend on $j$, so let $d=d_j$.
  It holds in general that if $x_0,x_1,\ldots,x_{r-2}\in\F_{q^n}$ are linearly independent
  over $\F_q$, then the $r-1$ vectors $(x_0^{q^j}\ x_1^{q^j}\ \ldots\ x_{r-2}^{q^j})^T$,
  $j=0,1,\ldots,r-2$,
  are linearly independent over $\F_{q^n}$, and vice-versa.
  (\cite[Lemma 3.51, p. 109]{LiNi}).
  
  Hence $\rho$, $\rho^q$, $\ldots$, $\rho^{q^{r-2}}$ are linearly independent eigenvectors of $M^T$, and
  $M^T$ is similar to the matrix $M'=$diag$(\zeta,\zeta^q,\ldots,\zeta^{q^{r-2}})$.
  Let $e=[\F_q(\zeta):\F_q]$, then the diagonal of $M'$ contains $e$ distinct elements,
  each repeated $d$ times, hence $r-1=de$; but $e$ divides $n$, and the assumption
  $\gcd(n,r-1)=1$ implies $e=1$ and $\zeta\in\F_q$.
  As a consequence of this result,  $M'=\zeta I_{r-1}=M$, and
  finally (\ref{eqn:zeta}) gives $s'=s$.
\end{proof}

A first consequence of Proposition\ \ref{prop:u_i} is a uniqueness theorem for $q$-subgeometries
giving rise to a tangent splash, partly generalizing \cite[Theorem 5.1]{BaJa14}:
\begin{theorem}\label{thm:uniqueness}
  Let $\pi_0$, $\pi_1$ be $q$-subgeometries of
  $\PG(W)\cong\PG(r-1,q^n)$, and let $l_\infty=\PG(U)$ be a line
  of $\PG(W)$ tangent to both $\pi_0$ and $\pi_1$.
  Assume $\gcd(n,r-1)=1$.
  If $S(\pi_0,l_\infty)=S(\pi_1,l_\infty)$ and a $q$-subgeometry $H_0$ of a hyperplane
  in $\PG(W)$ exists such that $H_0\subset\pi_0\cap\pi_1$,
  and $T\not\in\overline{H_0}$, then $\pi_0=\pi_1$.
\end{theorem}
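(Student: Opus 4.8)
The plan is to reduce the theorem to the uniqueness clause of Proposition~\ref{prop:u_i}, applied separately to $\pi_0$ and to $\pi_1$ with the \emph{same} auxiliary data. Set $S:=S(\pi_0,l_\infty)=S(\pi_1,l_\infty)=T\cup\cA$, where $T$ is the centre; that the centres $\pi_0\cap l_\infty$ and $\pi_1\cap l_\infty$ coincide follows because the centre of a tangent splash of rank $r\ge3$ is its unique point of weight $r-1$ (Theorem~\ref{thm:weight}; two such points would force the defining $(r-1)$-dimensional subspace to contain two disjoint $(r-2)$-dimensional subspaces, impossible for $r\ge3$), and hence is intrinsic to $S$. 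By hypothesis $H_0$ is a $q$-subgeometry of dimension $r-2$ contained in both $\pi_0$ and $\pi_1$, so $H_0$ is a hyperplane of each, and $\overline{H_0}$ is a hyperplane of $\PG(W)$ spanned by points of $\pi_0$ (and by points of $\pi_1$).

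First I would pin down the point $P$ to which Propositions~\ref{prop:alg_description_TS} and~\ref{prop:u_i} will be applied. Since $T\notin\overline{H_0}$, the line $l_\infty$ is not contained in $\overline{H_0}$ and meets it in a single point $P\ne T$; as $\overline{H_0}$ is spanned by points of $\pi_0$, $P\in S$, hence $P\in\cA$ and $\wt(P)=1$. By Theorem~\ref{thm:weight}, $P$ lies on exactly one hyperplane of $\pi_0$, which must be $H_0$, and likewise on exactly one hyperplane of $\pi_1$, again $H_0$. Thus $H_0$ is precisely the hyperplane playing the role of ``$H_0$'' in Proposition~\ref{prop:u_i} for \emph{both} subgeometries.

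Next I would fix once and for all vectors $u,v\in U$ and scalars $\rho_1,\ldots,\rho_{r-2}\in\F_{q^n}$ as furnished by Proposition~\ref{prop:alg_description_TS} for the splash $T\cup\cA$ and this choice of $P$; these depend only on $\cA$ and $P$, and $T=\langle u\rangle_{q^n}$. Applying Proposition~\ref{prop:u_i} to $\pi_0$ (legitimate since $\gcd(n,r-1)=1$) produces the unique $s=(s_0\ s_1\ \ldots\ s_{r-2})^T\in W^{r-1}$ with $H_0=\cB(\langle s_0,\ldots,s_{r-2}\rangle_q)$ and $v=s_0+\sum_{i=1}^{r-2}\rho_i s_i$, together with $\pi_0=\cB(\langle u,s_0,\ldots,s_{r-2}\rangle_q)$. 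Applying the same proposition to $\pi_1$, with the identical $u$, $v$, $\rho_i$ and the identical $H_0$, produces $s'=(s'_0\ s'_1\ \ldots\ s'_{r-2})^T$ with $H_0=\cB(\langle s'_0,\ldots,s'_{r-2}\rangle_q)$, $v=s'_0+\sum_{i=1}^{r-2}\rho_i s'_i$, and $\pi_1=\cB(\langle u,s'_0,\ldots,s'_{r-2}\rangle_q)$.

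Finally, the uniqueness part of Proposition~\ref{prop:u_i} characterizes $s$ using only the data $W$, $H_0$, $v$ and $\rho=(1\ \rho_1\ \ldots\ \rho_{r-2})^T$, with no reference to the underlying subgeometry, so $s$ and $s'$ both satisfy its hypotheses and hence $s=s'$. Therefore $\pi_0=\cB(\langle u,s_0,\ldots,s_{r-2}\rangle_q)=\cB(\langle u,s'_0,\ldots,s'_{r-2}\rangle_q)=\pi_1$. The only point requiring care is the matching done in the previous two paragraphs: one must verify that the hypotheses of Propositions~\ref{prop:alg_description_TS} and~\ref{prop:u_i} are met for $\pi_0$ and for $\pi_1$ with exactly the same $u$, $v$, $\rho_i$ and the same $H_0$ — everything then follows from the uniqueness already proved in Proposition~\ref{prop:u_i}, with no new computation needed.
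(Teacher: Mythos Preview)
Your proposal is correct and follows the same approach as the paper: choose $P\in\overline{H_0}\cap l_\infty$, fix the data $u,v,\rho_i$ from Proposition~\ref{prop:alg_description_TS}, then invoke the uniqueness clause of Proposition~\ref{prop:u_i} (which depends only on $H_0$, $v$, and $\rho$, not on the subgeometry) to conclude that the tuples $s$ and $s'$ coming from $\pi_0$ and $\pi_1$ coincide, hence $\pi_0=\pi_1$. Your write-up simply fills in details the paper leaves implicit, notably that the centres of the two splashes agree and that $H_0$ is indeed the unique hyperplane of each subgeometry through $P$.
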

\begin{proof}
  Assume that the point $P$ in prop.\ \ref{prop:alg_description_TS} belongs to $H_0$.
  By prop.\ \ref{prop:u_i} conditions $(i)$ and $(ii)$ imply $(iii)$, whence 
  $\pi_0={\cal B}\left(\langle u,s_0,\ldots,s_{r-2}\rangle_{q}\right)=\pi_1$.
\end{proof}
The generalization above cannot be extended to any $n$;
on the contrary, the uniqueness part of the proof of prop.\ \ref{prop:u_i} suggests how to
construct distinct $q$-subgeometries having a common hyperplane and same tangent splash.
In particular, for even $n$ it is not true that two $q$-subplanes of $\PG(2,q^n)$
having the same tangent splash
and sharing a $q$-subline not through $T$ must coincide.
\begin{theorem}\label{thm:non_uniqueness}
  Assume $d=\gcd(n,r-1)>1$.
  Then two distinct $q$-subgeometries of $\PG(W)\cong\PG(r-1,q^n)$,
  say $\pi_0$ and $\pi_1$, a line $l_\infty$ of $\PG(W)$ tangent to both $\pi_0$ and $\pi_1$
  at a point $T$, and a common hyperplane $H_0$ to $\pi_0$ and $\pi_1$ with $T\not\in\overline H$ exist such that
  $S(\pi_0,l_\infty)=S(\pi_1,l_\infty)$.
\end{theorem}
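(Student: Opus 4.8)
The plan is to run the uniqueness argument in the proof of Proposition~\ref{prop:u_i} in reverse. There, a pair $(\zeta,M)\in\F_{q^n}^*\times\GL(r-1,q)$ with $M^T\rho=\zeta\rho$, where $\rho=(1\ \rho_1\ \ldots\ \rho_{r-2})^T$ and $1,\rho_1,\ldots,\rho_{r-2}$ are linearly independent over $\F_q$, governs the possible failure of uniqueness of the $q$-subgeometry producing a prescribed tangent splash with prescribed hyperplane $H_0$ and centre $T$; the hypothesis $\gcd(n,r-1)=1$ was used exactly to force $\zeta\in\F_q$ and hence $s'=s$. When $d=\gcd(n,r-1)>1$ I would instead \emph{construct} such a pair with $\zeta\notin\F_q$ and reconstruct from it two distinct $q$-subgeometries with a common hyperplane and the same tangent splash.

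First I would fix the linear-algebraic data. Choose $\zeta\in\F_{q^n}$ with $[\F_q(\zeta):\F_q]=d$ (possible since $d\mid n$); then $d\mid r-1$ and $\zeta\notin\F_q$. Let $\mu\in\F_q[t]$ be the minimal polynomial of $\zeta$, of degree $d$, and $C\in\GL(d,q)$ its companion matrix, normalised so that $Cw=\zeta w$ for $w:=(1\ \zeta\ \ldots\ \zeta^{d-1})^T$. Put $m:=(r-1)/d$ and $M^T:=\mathrm{diag}(C,\ldots,C)$ ($m$ blocks), so $M\in\GL(r-1,q)$ and the $\zeta$-eigenspace of $M^T$ over $\F_{q^n}$ is $\{(\lambda_1w^T,\ldots,\lambda_mw^T)^T:\lambda_1,\ldots,\lambda_m\in\F_{q^n}\}$. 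Since $m=(r-1)/d\le n/d=[\F_{q^n}:\F_{q^d}]$ (using $r-1\le n$), I can choose $\lambda_1=1$ and $\lambda_2,\ldots,\lambda_m$ linearly independent over $\F_{q^d}$; setting $\rho:=(\lambda_1w^T,\ldots,\lambda_mw^T)^T$, the entries of $\rho$ span $\bigoplus_k\lambda_k\F_{q^d}$, which has $\F_q$-dimension $md=r-1$, so the entries $1=\rho_0,\rho_1,\ldots,\rho_{r-2}$ of $\rho$ are linearly independent over $\F_q$, while $M^T\rho=\zeta\rho$ with $\zeta\notin\F_q$.

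Next I would build the geometry, inverting Propositions~\ref{prop:alg_description_TS} and~\ref{prop:u_i}. Fix an $\F_{q^n}$-basis $e_1,\ldots,e_r$ of $W$ and put $u:=e_1$, $T:=\langle u\rangle_{q^n}$, $s:=(e_2\ \ldots\ e_r)^T$, $v:=\rho^Ts=\sum_{j=0}^{r-2}\rho_je_{j+2}$, $P:=\langle v\rangle_{q^n}$, $U:=\langle u,v\rangle_{q^n}$, $l_\infty:=\PG(U)$, $s':=\zeta^{-1}Ms$, and set $\pi_0:=\cB(\langle u,e_2,\ldots,e_r\rangle_q)$, $\pi_1:=\cB(\langle u,s'_0,\ldots,s'_{r-2}\rangle_q)$, $H_0:=\cB(\langle e_2,\ldots,e_r\rangle_q)$. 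I would then verify: (i)~$\pi_0,\pi_1$ are genuine $q$-subgeometries, since $M\in\GL(r-1,q)$ forces $\langle s'_0,\ldots,s'_{r-2}\rangle_{q^n}=\langle e_2,\ldots,e_r\rangle_{q^n}$, so $u,s'_0,\ldots,s'_{r-2}$ is again an $\F_{q^n}$-basis; (ii)~$l_\infty$ is tangent to both $\pi_0$ and $\pi_1$ at $T$, because in the frame $(e_1,\ldots,e_r)$ one has $v=\sum_j\rho_je_{j+2}$ and in the frame $(u,s'_0,\ldots,s'_{r-2})$ one has $v=\sum_j\rho_js'_j$ (see (iv)), so in either case a rational point of $\pi_i$ on $l_\infty$ distinct from $T$ would force some $\rho_i\in\F_q$, contradicting the independence of $1,\rho_1,\ldots,\rho_{r-2}$; (iii)~$H_0$ is a hyperplane of $\pi_0$ (clear) and of $\pi_1$, with $T\notin\overline{H_0}=\langle e_2,\ldots,e_r\rangle_{q^n}$ and $P\in\overline{H_0}$, the point being that $\{\langle\sum_jc_js'_j\rangle_{q^n}:c_j\in\F_q\}$ and $\{\langle\sum_jc_je_{j+2}\rangle_{q^n}:c_j\in\F_q\}$ coincide as point sets (the scalar $\zeta^{-1}$ and the matrix $M$ disappear upon projectivising); (iv)~$S(\pi_0,l_\infty)=S(\pi_1,l_\infty)$; and (v)~$\pi_0\ne\pi_1$.

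For (iv), which I view as the crux, I would compute each splash in the relevant $\F_q$-frame: the extensions of the hyperplanes through $T$ meet $l_\infty$ exactly in $T$, whereas the extension of a hyperplane not through $T$ meets $l_\infty$ in $\langle v-\delta u\rangle_{q^n}$ as $\delta$ ranges over the $\F_q$-span of $1,\rho_1,\ldots,\rho_{r-2}$; the decisive observation is that $v$ has the \emph{same} coordinate vector $(0,1,\rho_1,\ldots,\rho_{r-2})$ relative to both $(e_1,e_2,\ldots,e_r)$ and $(u,s'_0,\ldots,s'_{r-2})$, since $\rho^Ts'=\zeta^{-1}\rho^TMs=\zeta^{-1}(M^T\rho)^Ts=\rho^Ts=v$, so the two splashes literally coincide. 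For (v), if $\pi_0=\pi_1$ then by the Segre-variety fact \cite[Theorem~2.4]{LaVaPrep} used in Proposition~\ref{prop:u_i} there is $\xi\in\F_{q^n}^*$ with $\langle u,s'_0,\ldots,s'_{r-2}\rangle_q=\xi\langle u,e_2,\ldots,e_r\rangle_q$; comparing the vector $u$ on the two sides gives $\xi\in\F_q$ and hence equality of the two $\F_q$-subspaces, but the first row of $M$ is nonzero, so $s'_0=\zeta^{-1}\sum_jM_{0j}e_{j+2}$ lies in $\langle e_1,\ldots,e_r\rangle_q$ only if $\zeta\in\F_q$, a contradiction. I expect the main obstacle to be the second step: one must choose $\zeta\notin\F_q$ whose $\F_q$-degree $e$ satisfies both $e\mid n$ (so $\zeta\in\F_{q^n}$) and $e\mid r-1$ (so that the Frobenius orbit of $\zeta$ partitions the $r-1$ coordinates evenly, which makes $M^T$ defined over $\F_q$ and its $\zeta$-eigenspace large enough to hold a $\rho$ with $\F_q$-independent entries), and this is possible exactly because $\gcd(n,r-1)>1$ --- dual to the way $\gcd(n,r-1)=1$ enforced uniqueness in Proposition~\ref{prop:u_i}, with the Lidl--Niederreiter criterion \cite[Lemma~3.51]{LiNi} again linking the $\F_q$-independence of $1,\rho_1,\ldots,\rho_{r-2}$ to the eigenvector picture. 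The checks (i)--(v) themselves are coordinate computations in the same spirit as the proofs of Propositions~\ref{prop:alg_description_TS} and~\ref{prop:u_i}.
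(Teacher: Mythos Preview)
Your proposal is correct and follows essentially the same strategy as the paper: choose $\zeta\in\F_{q^n}$ of degree $d$ over $\F_q$, build a block-diagonal matrix $M\in\GL(r-1,q)$ from the companion matrix of its minimal polynomial, take $\rho$ an eigenvector with $\F_q$-independent entries, and define $s'$ from $s$ via $M$ and $\zeta$ to get the second subgeometry. The only differences are presentational: you construct $\pi_0$ explicitly from a chosen basis and verify the equality of splashes by a direct coordinate computation in both frames, whereas the paper obtains $\pi_0$ and $s$ abstractly from Theorem~\ref{thm:zero} and Proposition~\ref{prop:u_i} and proves $S(\pi_0,l_\infty)=S(\pi_1,l_\infty)$ by exhibiting the projectivity $\hat\kappa$ (defined by $u\mapsto u$, $s_i\mapsto s'_i$) that fixes $l_\infty$ pointwise; your identity $\rho^Ts'=\rho^Ts=v$ is exactly the computation $\kappa(v)=v$ in the paper.
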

\begin{proof}
  Let $\zeta\in\F_{q^n}$ such that $[\F_q(\zeta):\F_q]=d$, and let $M_0\in\GL(d,q)$ be the companion
  matrix having as characteristic polynomial the minimal polynomial of $\zeta$.
  Let $w=(w_1\ w_2\ \ldots\ w_d)^T\in\F_{q^n}^d$ be an eigenvector of $M_0$ related to $\zeta$.
  Then $w$, $w^q$, $\ldots$, $w^{q^{d-1}}$ are linearly independent over $\F_{q^n}$,
  and this implies that $w_1$, $w_2$, $\ldots$, $w_d$ are linearly independent over $\F_q$.
  Therefore $w_1=1$ may be assumed.
  Since $w^{q^d}$ is an eigenvector of the one-dimensional eigenspace related to $\zeta$,
  it follows $w^{q^d}=w$, and $\langle1=w_1,w_2,\ldots,w_d\rangle_q=\F_{q^d}=\F_q(\zeta)$.
  Next, let $1=\omega_1$, $\omega_2$, $\ldots$, $\omega_{n/d}$ be an $\F_{q^d}$-basis
  of $\F_{q^n}$.
  Then $\{\omega_i w_j\mid i=1,2,\ldots,n/d,\ j=1,2,\ldots,d\}$ is an $\F_q$-basis of $\F_{q^n}$, and
  \[
    \rho=(1\ w_2\ \ldots\ w_d\ \omega_2\ \omega_2w_2\ \ldots\ \omega_2w_d\ \ldots\ \omega_{(r-1)/d}w_d)^T
    \in \F_{q^n}^{r-1}
  \]
  is an eigenvector of
  \[
  M=
  \begin{tikzpicture}[baseline,decoration={brace,amplitude=5pt}]
    \matrix (magic) [matrix of math nodes,left delimiter=(,right delimiter=).] {    
    M_0&&&\\ &M_0&&\\ &&\ddots&\\ &&&M_0\\
    };
    \draw[decorate,black] (magic-1-1.north) -- (magic-4-4.north) node[above=5pt,midway,sloped] {$(r-1)/d$};
  \end{tikzpicture}
  \]
  Now let $T\cup\cA$ be an $\F_q$-club of rank $r$ and head $T$ in $l_\infty$,
  satisfying (\ref{eqn:alg_description_TS}) with $\rho_1=w_2$, $\rho_2=w_3$, $\ldots$, 
  $\rho_{r-2}=\omega_{(r-1)/d}w_d$.
  By theorem \ref{thm:zero}, $T\cup\cA$ is a tangent splash $S(\pi_0,l_\infty)$ for some $q$-subgeometry
  $\pi_0$ of a projective space $\pi\cong\PG(r-1,q^n)$.
  Take $s\in W^{r-1}$ as in prop.\ \ref{prop:u_i}, and define $\zeta s'=M^Ts$, implying
  $\zeta\langle s'_0,s'_1,\ldots,s'_{r-2}\rangle_q=\langle s_0,s_1,\ldots,s_{r-2}\rangle_q$.
  Furthermore, $\pi_1:=\cB(\langle u,s'_0,\ldots,s'_{r-2}\rangle_q)\neq\pi_0$ since $\zeta\not\in\F_q$.
  Define $H_0=\cB\left(\langle s_0,\ldots, s_{r-2}\rangle_{{q}}\right)$; 
  clearly $H_0\subset\pi_0\cap\pi_1$ and $T\not\in\overline{H_0}$.  
%  the equation
%  $H\cap\pi_0=H\cap\pi_1$ is then straightforward.
  Let $\kappa\in\GL(r,q^n)$ be defined by $\kappa(u)=u$, $\kappa(s_i)=s'_i$, $i=0,1,\ldots,r-2$.
  The related projectivity $\hat{\kappa}$ maps $\pi_0$ onto $\pi_1$.
  Furthermore
  \[
    \kappa(v)=\rho^Ts'=\zeta^{-1}\rho^TM^Ts=\rho^Ts=v,
  \]
  so the restriction of $\hat\kappa$ to $l_\infty$ is the identity, whence 
  $S(\pi_0,l_\infty)=S(\pi_1,l_\infty)$.
%  As a matter of fact, for any $x,y\in\F_q$, since $\rho^{q+1}=o_2$,
%  $\langle xu'_1+yu'_2\rangle_{\F_{q^n}}=
%  \langle \rho^{-1}((-o_1x-y)u_1+o_2xu_2)\rangle_{\F_{q^n}}$
%  and the latter is a point of $\pi$.
%  The points of $S(\pi')\setminus\{T\}$ are 
%  associated to the vectors $v_{st}=(s+t\rho)u+u'_1+\rho u'_2$ for $s,t\in\F_q$.
%  Direct substitution of (\ref{eqn:non_uniqueness}), taking into account $\rho+\rho^q=-o_1$, 
%  gives $v_{st}=(s+t\rho)u+u_1+\rho u_2=su+t\rho u+v$ and, by prop.\ \ref{prop:alg_description_TS},
%  $S(\pi)=S(\pi')$.
\end{proof}

\begin{prop}\label{prop:same_TS}
  Let $\pi_0$ and $\pi_1$ 
  be two $q$-subgeometries of  
  $\PG(W)\cong\PG(r-1,q^n)$,
  both tangent to a line $l_\infty$ in $\PG(W)$.
  If $S(\pi_0,l_\infty)=S(\pi_1,l_\infty)$, then a $\hat{\kappa}\in\PGL(r,q^n)$ exists such that 
  $l_\infty^{\hat{\kappa}}=l_\infty$, $\pi_0^{\hat{\kappa}}=\pi_1$.
\end{prop}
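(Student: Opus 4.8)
The plan is to exhibit $\hat\kappa$ explicitly, as the projectivity induced by an $\F_{q^n}$-linear bijection of $W$ that carries a frame adapted to $\pi_0$ onto a frame adapted to $\pi_1$, with Propositions~\ref{prop:alg_description_TS} and~\ref{prop:u_i} supplying these frames. The first step is a normalization: by Theorems~\ref{thm:equivalence} and~\ref{thm:zero}, $S:=S(\pi_0,l_\infty)=S(\pi_1,l_\infty)$ is an $\F_q$-club of rank $r$, and its centre is an invariant of the point set $S$, being the unique point of weight $r-1$ (for $q>2$ it is also the unique point $T$ with $\subl_q(T,P,Q)\subseteq S$ for all distinct $P,Q\in S\setminus\{T\}$). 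Consequently $\pi_0\cap l_\infty=\pi_1\cap l_\infty=:T$, i.e. both splashes share the centre $T$.

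Next, writing $l_\infty=\PG(U)$, I would apply Proposition~\ref{prop:alg_description_TS} \emph{once}, to the point set $S$, its centre $T$, and a fixed $P\in{\cal A}:=S\setminus\{T\}$. This produces $u,v\in U$ and $\rho_1,\dots,\rho_{r-2}\in\F_{q^n}$ with $1,\rho_1,\dots,\rho_{r-2}$ linearly independent over $\F_q$, $\langle u\rangle_{q^n}=T$, $\langle v\rangle_{q^n}=P$, and ${\cal A}$ given by~(\ref{eqn:alg_description_TS}). Since that description refers only to $S$, $T$, $P$ and the line $U$, and not to the subgeometry producing the splash, the hypotheses of Proposition~\ref{prop:u_i} are met by $\pi_0$ and by $\pi_1$ with \emph{the same} data $u,v,\rho_1,\dots,\rho_{r-2}$. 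Applying Proposition~\ref{prop:u_i} to $\pi_0$, with $H_0$ the hyperplane of $\pi_0$ containing $P$, yields $s=(s_0\ \dots\ s_{r-2})^T\in W^{r-1}$ with $v=s_0+\sum_{i=1}^{r-2}\rho_is_i$ and $\pi_0={\cal B}(\langle u,s_0,\dots,s_{r-2}\rangle_{q})$; applying it to $\pi_1$, with $H_1$ the hyperplane of $\pi_1$ containing $P$, yields $s'=(s'_0\ \dots\ s'_{r-2})^T\in W^{r-1}$ with $v=s'_0+\sum_{i=1}^{r-2}\rho_is'_i$ and $\pi_1={\cal B}(\langle u,s'_0,\dots,s'_{r-2}\rangle_{q})$.

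Finally I would define $\kappa\in\GL(r,q^n)$ by $\kappa(u)=u$ and $\kappa(s_i)=s'_i$ for $i=0,1,\dots,r-2$. This is legitimate because a $q$-subgeometry of $\PG(W)$ spans $\PG(W)$, so both $\{u,s_0,\dots,s_{r-2}\}$ and $\{u,s'_0,\dots,s'_{r-2}\}$ are $\F_{q^n}$-bases of $W$. Let $\hat\kappa\in\PGL(r,q^n)$ be the induced projectivity. Then $\kappa$ maps $\langle u,s_0,\dots,s_{r-2}\rangle_{q}$ onto $\langle u,s'_0,\dots,s'_{r-2}\rangle_{q}$, whence $\pi_0^{\hat\kappa}=\pi_1$; moreover, since $\kappa$ is $\F_{q^n}$-linear and the $\rho_i$ lie in $\F_{q^n}$, $\kappa(v)=s'_0+\sum_{i=1}^{r-2}\rho_is'_i=v$, so $\kappa$ fixes both $u$ and $v$, hence fixes $U=\langle u,v\rangle_{q^n}$ pointwise (note $T\neq P$), and therefore $l_\infty^{\hat\kappa}=l_\infty$ (in fact $\hat\kappa$ restricts to the identity on $l_\infty$, in line with the construction in the proof of Theorem~\ref{thm:non_uniqueness}). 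The one point requiring care is the normalization in the first step: one must know that the centre of a tangent splash is a genuine invariant of its point set, so that $\pi_0$ and $\pi_1$ do share the centre $T$ and the two instances of Proposition~\ref{prop:alg_description_TS} can be taken identical; once that reduction is secured, the remainder is a routine assembly of Propositions~\ref{prop:alg_description_TS} and~\ref{prop:u_i}.
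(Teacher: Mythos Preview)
Your proof is correct and follows essentially the same approach as the paper: apply Propositions~\ref{prop:alg_description_TS} and~\ref{prop:u_i} with the same data $u,v,\rho$ to obtain $s$ and $s'$, then define $\kappa$ by $u\mapsto u$, $s_i\mapsto s'_i$ and check that $\hat\kappa$ fixes $T$ and $P$, hence $l_\infty$. You add the justification (which the paper leaves implicit) that the centre is an invariant of the point set $S$, so that $\pi_0\cap l_\infty=\pi_1\cap l_\infty$ and the two applications of Proposition~\ref{prop:alg_description_TS} may indeed be taken with identical $u,v,\rho$; this extra care is warranted, since the construction of $\kappa$ genuinely relies on using the \emph{same} $u$ and $\rho$ for both subgeometries.
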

\begin{proof}
  Assume $S(\pi_0,l_\infty)=S(\pi_1,l_\infty)=T\cup{\cal A}$ as described in prop.\ \ref{prop:alg_description_TS}.
  By prop.\ \ref{prop:u_i}, $s,s'\in W^{r-1}$ exist such that
  $v=\rho^T s=\rho^T s'$, where $\rho=(1\ \rho_1\ \ldots\ \rho_{r-2})$,
   and
  $\pi_0={\cal B}\left(\langle u,s_0,\ldots,s_{r-2}\rangle_{q}\right)$,
  $\pi_1={\cal B}\left(\langle u,s'_0,\ldots,s'_{r-2}\rangle_{q}\right)$.
  Take $\kappa\in\GL(r,q^n)$ defined by $u^\kappa=u$, $s_i^\kappa=s'_i$, $i=0,1,\ldots,r-2$.
  By construction the associated projectivity $\hat{\kappa}$ satisfies $\pi_0^{\hat{\kappa}}=\pi_1$, 
  $P^{\hat{\kappa}}=P$, $T^{\hat\kappa}=T$, hence $l_\infty^{\hat{\kappa}}=l_\infty$.
\end{proof}

By the next result, there is a bijection between orbits, with respect to $\PGL(2,q^n)$,
 of $\F_q$-clubs in $\PG(1,q^n)$
 and orbits,  with respect to $\AGL(r-1,q^n)$,
 of $q$-subgeometries in $\AG(r-1,q^n)$ tangent to the line
at infinity.

\begin{theorem}\label{thm:equivalenza}
  Let $\pi_0$ and $\pi_1$ 
  be two $q$-subgeometries of  
  $\PG(W)\cong\PG(r-1,q^n)$,
  both tangent to a line $l_\infty$ in $\PG(W)$.
  If a collineation $\theta\in\PGGL (2,q^n)$ exists such that $S(\pi_0,l_\infty)^\theta=S(\pi_1,l_\infty)$, 
  then
  a $\tau\in\PGGL(r,q^n)$ exists such that $l_\infty^\tau=l_\infty$ and $\pi_0^\tau=\pi_1$.
  Conversely, if $\tau\in\PGGL(r,q^n)$ exists such that $l_\infty^\tau=l_\infty$ and $\pi_0^\tau=\pi_1$
  then $S(\pi_0,l_\infty)^\tau=S(\pi_1,l_\infty)$.
  The assertions above still hold by substituting every $\PGGL (2,q^n)$ and $\PGGL (r,q^n)$
  with $\PGL(2,q^n)$ and $\PGL(r,q^n)$, respectively.
\end{theorem}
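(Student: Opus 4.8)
The plan is to establish Theorem~\ref{thm:equivalenza} by reducing the collineation version to the projectivity version, and then handling the projectivity version by invoking Proposition~\ref{prop:same_TS}. For the converse direction (the easy one), observe that if $\tau\in\PGGL(r,q^n)$ satisfies $l_\infty^\tau=l_\infty$ and $\pi_0^\tau=\pi_1$, then $\tau$ maps subspaces spanned by points of $\pi_0$ to subspaces spanned by points of $\pi_1$, hence it maps the set of points of $l_\infty$ lying on such subspaces to the analogous set for $\pi_1$; that is, $S(\pi_0,l_\infty)^\tau=S(\pi_1,l_\infty)$ by the very definition of splash. The restriction of $\tau$ to $l_\infty$ is an element of $\PGGL(2,q^n)$ (or of $\PGL(2,q^n)$ if $\tau$ was projective), which supplies the required $\theta$, so the converse is immediate in both the $\PGGL$ and $\PGL$ formulations.

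For the forward direction, suppose $\theta\in\PGGL(2,q^n)$ with $S(\pi_0,l_\infty)^\theta=S(\pi_1,l_\infty)$. First I would extend $\theta$ to a collineation $\tilde\theta\in\PGGL(r,q^n)$ of $\PG(W)$ fixing $l_\infty$ setwise and restricting to $\theta$ on $l_\infty$ (this is routine: pick a complement and extend semilinearly with the same field automorphism). Set $\pi_0':=\pi_0^{\tilde\theta}$, which is again a $q$-subgeometry tangent to $l_\infty$, and note that $S(\pi_0',l_\infty)=S(\pi_0,l_\infty)^{\tilde\theta}=S(\pi_0,l_\infty)^\theta=S(\pi_1,l_\infty)$. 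So $\pi_0'$ and $\pi_1$ are two $q$-subgeometries of $\PG(W)$, both tangent to $l_\infty$, with the \emph{same} tangent splash on $l_\infty$. Now Proposition~\ref{prop:same_TS} yields a $\hat\kappa\in\PGL(r,q^n)$ with $l_\infty^{\hat\kappa}=l_\infty$ and $(\pi_0')^{\hat\kappa}=\pi_1$. Then $\tau:=\tilde\theta\,\hat\kappa$ (composing in the appropriate order) lies in $\PGGL(r,q^n)$, fixes $l_\infty$ setwise, and satisfies $\pi_0^\tau=\pi_1$, as desired.

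For the final sentence of the statement — that the assertions hold verbatim with $\PGGL$ replaced by $\PGL$ throughout — I would simply re-run the same argument starting from $\theta\in\PGL(2,q^n)$: then the extension $\tilde\theta$ can be chosen in $\PGL(r,q^n)$ (extend \emph{linearly}, with trivial field automorphism), and since $\hat\kappa\in\PGL(r,q^n)$ already, the composite $\tau=\tilde\theta\,\hat\kappa$ lands in $\PGL(r,q^n)$. The converse direction restricted to $\PGL$ was already noted above. One should also double-check the degenerate small cases, but since $3\le r\le n$ is in force throughout this section and Proposition~\ref{prop:same_TS} is stated in that generality, nothing special is needed.

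The main obstacle is not any single hard computation but rather making sure the extension step is clean: one must verify that $\theta$ (a priori only defined on the one-dimensional $l_\infty$) genuinely extends to a collineation of all of $\PG(W)$ carrying the \emph{same} companion field automorphism and stabilizing $l_\infty$, so that its action on splashes is governed purely by its restriction to $l_\infty$. Everything else is bookkeeping with the definition of a splash and a direct appeal to Proposition~\ref{prop:same_TS}; the substance of the theorem has effectively been isolated in that proposition, and here we are merely packaging it together with the trivial extension lemma and the tautological converse.
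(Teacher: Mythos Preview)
Your proposal is correct and follows essentially the same approach as the paper: extend $\theta$ to a collineation $\overline\theta\in\PGGL(r,q^n)$ of $\PG(W)$ stabilizing $l_\infty$, observe that $\pi_0^{\overline\theta}$ and $\pi_1$ then have the same tangent splash on $l_\infty$, apply Proposition~\ref{prop:same_TS} to obtain $\hat\kappa\in\PGL(r,q^n)$, and set $\tau=\overline\theta\hat\kappa$; the converse and the $\PGL$ variant are, as you say, immediate. Your write-up is more explicit than the paper's (which dispatches the converse and the $\PGL$ case with ``straightforward''), but the substance is identical.
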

\begin{proof}
  The collineation $\theta$ can be extended to $\overline\theta\in\PGGL(r,q^n)$.
  From $S(\pi_0^{\overline\theta},l_\infty)=S(\pi_1,l_\infty)$, by prop.\ \ref{prop:same_TS}, a
  $\hat\kappa\in\PGL(r,q^n)$ exists such that $l_\infty^{\hat{\kappa}}=l_\infty$,
  $\pi_0^{\overline\theta\hat\kappa}=\pi_1$.
  Just take $\tau=\overline\theta\hat\kappa$ in order to obtain the first assertion.
  The remainder of the theorem is straightforward.
\end{proof}

\begin{remark}
Theorem \ref{thm:equivalenza} translates the classification problem of clubs, which remains open, 
in terms of AGL$(2,q^n)$.
Since for $n>3$ there exist projectively nonequivalent clubs \emph{\cite{LaVV10}}, from
Theorem \ref{thm:equivalenza} it follows that, with respect to 
  $\AGL(2,q^n)$,
there exist at least two non-equivalent projective $q$-subplanes of $\AG(2,q^n)$ that
  are tangent to the line at infinity.  
\end{remark}

\end{document}